\theoremstyle{plain}
\newtheorem{dummy}{anything}[section]
\newtheorem{theorem}[dummy]{Theorem}
\newtheorem{lemma}[dummy]{Lemma}
\newtheorem{question}[dummy]{Question}
\newtheorem{proposition}[dummy]{Proposition}
\theoremstyle{definition}
\newtheorem{definition}[dummy]{Definition}
\newtheorem{example}[dummy]{Example}
\newtheorem{construction}[dummy]{Construction}
\newtheorem{remark}[dummy]{Remark}
\theoremstyle{remark}
\newcommand{\del}{\partial}
\newcommand{\Z}{\mathbb{Z}}
\newcommand{\R}{\mathbb{R}}
\newcommand{\C}{\mathbb{C}}
\def\dfn#1{{\em #1}}
\def\R{\mathbb{R}}
\def\Z{\mathbb{Z}}
\def\C{\mathbb{C}}
\begin{document}

\title[Iterated planar contact manifolds]{Generalizations of planar contact manifolds\\ to higher dimensions}

\author{Bahar Acu}
\address{Claremont Colleges: Pitzer College \\ Claremont \\ California \\ USA}
\email{baharacu@gmail.com}
\email{baharacu@pitzer.edu}

\author{John B. Etnyre}
\address{School of Mathematics \\ Georgia Institute of Technology \\  Atlanta  \\ Georgia \\ USA}
\email{etnyre@math.gatech.edu}

\author{Burak Ozbagci}
\address{Department of Mathematics \\ Koc University \\ Istanbul \\ Turkey}
\email{bozbagci@ku.edu.tr}

\subjclass[2000]{Primary 57R17; \ Secondary 53D35}

\keywords{open book decompositions, Lefschetz fibrations, iterated planar contact manifolds}

\begin{abstract} Iterated planar contact manifolds are a generalization of three dimensional planar contact manifolds to higher dimensions. We study some basic topological properties of iterated planar contact manifolds and discuss several examples and constructions showing that many contact manifolds are iterated planar. We also observe that for any odd integer $m > 3$, any finitely presented group can be realized as the fundamental group of some iterated planar contact manifold of dimension $m$. Moreover, we introduce another generalization of three dimensional planar contact manifolds that we call projective. Finally, building symplectic cobordisms via open books, we show that some projective contact manifolds admit explicit symplectic caps. 
\end{abstract}

\maketitle

\section{Introduction}

The groundbreaking work of  Giroux \cite{Giroux02}, that characterized contact manifolds in terms of open book decompositions with symplectic pages, has had a remarkable impact in the study of the global topology of contact manifolds. By studying the properties of the pages of an open book, one can learn a lot about the adapted contact structure. For example, given a contact $3$--manifold $(M, \xi)$, one may ask what the minimal genus is for an open book that supports $\xi$. This is the {\em support genus} $sg(\xi)$ of $\xi$,  \cite{EtnyreOzbagci08}. If $sg(\xi)=0$, then the contact structure $\xi$ and  the contact $3$--manifold $(M, \xi)$, as well as the supporting open book, are all called planar. So far there is a great deal known about planar contact manifolds (in dimension 3, by definition), which we itemize below.  

\begin{enumerate}
\item\label{1} All overtwisted contact structures are planar and hence a contact structure with positive support genus is tight, \cite{Etnyre04b}. 
\item\label{2}  Any (weak) symplectic filling $(X,\omega)$ of a planar contact manifold must have connected boundary, and $b_2^+(X)=b_2^0(X)=0$, \cite{Etnyre04b}.
\item\label{3}  In particular, by the proof of item (2) in  \cite{Etnyre04b}, any planar contact manifold has a symplectic cap that contains a symplectic sphere of positive self-intersection and any symplectic filling of a planar contact manifold embeds as a convex domain in a rational surface. 
\item\label{4}  Planar contact manifolds do not embed as weak nonseparating contact-type hypersurfaces in closed symplectic $4$--manifolds (cf. \cite{AlbersBramhamWendl2010, NiederkrugerWendl11}).
\item\label{5}  The Weinstein conjecture is true for planar contact manifolds, \cite{AbbasCieliebakHofer2005}. More generally, it is now known to be true for all contact $3$--manifolds \cite{Taubes07}, but the conjecture was first established for planar contact manifolds, for which the proof is much less involved compared to the general case. 
\item\label{6}  Given a symplectic filling of a contact manifold, a planar open book supporting the contact structure can be extended to a Lefschetz fibration over the symplectic filling,  \cite{NiederkrugerWendl11, Wendl10}. 
\item\label{7}  The previous fact allows us to classify fillings of certain planar contact manifolds \cite{Kaloti??, LiOzbagci18, PlamenevskayaVanHorn-Morris2010}. 
\item\label{8}  Any weak symplectic filling of a planar contact manifold can be deformed into a Stein filling \cite{NiederkrugerWendl11, Wendl10}. 
\item\label{9}  Planarity of a contact manifold can be obstructed by the contact element  in Heegaard Floer homology, \cite{OzsvathStipsiczSzabo05}. 
\end{enumerate}

In this article, we discuss two notions that can be thought of as generalizations of planar open books to higher dimensions:  \begin{itemize}
\item iterated planar open books and 
\item projective open books. 
\end{itemize}

We will mainly focus on basic properties and examples of these open books, and more importantly,  the contact manifolds supported by such open books. A long term goal is to see how many of the items above can be proven for these special contact manifolds in higher dimensions. 

We would like to point out that only very recently Colin, Honda, and Tian \cite{ColinHondaTian20}  set up the general framework of higher-dimensional Heegaard Floer homology, defined the contact class, and used it to give an obstruction to the Liouville fillability of a contact manifold and a sufficient
condition for the Weinstein conjecture to hold. We will say nothing more about item (9), and concentrate on the others. 


\subsection{Iterated planar open books and adapted contact manifolds}\label{sec: itpobs}

Iterated planar contact manifolds (which are of dimension greater than three, by definition) were introduced by the first author in \cite{Acu17preprint}, as a natural generalization of planar contact manifolds. Briefly, an open book is called iterated planar, if its page has an iterated planar Lefschetz fibration, and  the notion of iterated planarity for a Lefschetz fibration is defined inductively by saying that its regular fibers are iterated planar, with the base case being a $2$--dimensional regular fiber that is a planar surface. See Section~\ref{sec:IPOB} for a precise definition. A contact manifold supported by an iterated planar open book is called {\em iterated planar.} 

To date only a few of the above results are known for iterated planar contact manifolds. The first substantial result for iterated planar contact manifolds  was the verification of the Weinstein conjecture by the first author.

\begin{theorem}[Acu 2017, \cite{Acu17preprint}]
If $(M, \xi)$ is an iterated planar contact manifold then any Reeb vector field for $\xi$ has a periodic orbit. 
\end{theorem}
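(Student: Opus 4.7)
The plan is to adapt the Abbas--Cieliebak--Hofer proof of the Weinstein conjecture for planar contact $3$--manifolds to higher dimensions, proceeding by induction on the depth of the iterated planar structure. I would first fix a contact form $\alpha$ for $\xi$ adapted to the supporting iterated planar open book $(B,\pi)$, and then work in the symplectization $(\R\times M,\, d(e^t\alpha))$ equipped with a cylindrical almost complex structure $J$ compatible with the open book, so that the pages lift to $J$-holomorphic submanifolds and the binding provides controlled asymptotic ends.

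The key moduli space would be built from the iterated Lefschetz fibration on the page. By definition the page $P$ carries a Lefschetz fibration $f:P\to\Sigma$ over a planar surface whose regular fiber $F$ is iterated planar of strictly smaller depth. In the base case $\dim M = 3$, the page is itself a planar surface and one recovers the Abbas--Cieliebak--Hofer family of finite energy holomorphic disks with boundary on the pages, foliating the complement of a periodic Reeb orbit. For the inductive step, I would use $f$ to construct a family of finite energy $J$-holomorphic curves in $\R\times M$ whose projections trace out the Lefschetz fibers of $f$; this effectively reduces the problem of finding a Reeb orbit on $M$ to one in a neighborhood of $F$, where the inductive hypothesis can be applied after a model computation.

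Running SFT-style compactness on this family then yields, in the limit, a broken holomorphic building whose asymptotic ends are periodic Reeb orbits of $\alpha$: either the family breaks and produces such an orbit directly, or else it cannot break and one derives a topological contradiction from the fact that $\R\times M$ is noncompact.

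The main obstacle is exactly the higher-dimensional holomorphic curve analysis. Concretely, one must establish non-emptiness of the moduli space via an explicit local model near a regular fiber of $f$, ensure compactness up to SFT breaking, and control bubbling both at the Lefschetz critical points and along the binding. The delicate point is verifying that every boundary degeneration actually produces a closed Reeb orbit on $M$, rather than escaping into the binding or collapsing into a lower-dimensional limit that fails to detect dynamics on $M$; handling this correctly is where the iterated planar hypothesis on $P$ must be invoked, and is the crux of the argument.
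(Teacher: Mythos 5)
The paper does not actually prove this theorem; it is imported from Acu's preprint \cite{Acu17preprint}, so there is no internal argument to compare against. Evaluating your sketch on its own terms, the central problem is the inductive step. You propose to ``reduce the problem of finding a Reeb orbit on $M$ to one in a neighborhood of $F$, where the inductive hypothesis can be applied,'' where $F$ is the regular fiber of the Lefschetz fibration on the page $P$. But the inductive hypothesis is a statement about closed iterated planar \emph{contact} manifolds, whereas $F$ is a Liouville domain: it carries a symplectic form, not a contact form, and it has no Reeb dynamics. There is nothing for the inductive hypothesis to say about $F$ or a neighborhood of it. The lower-dimensional closed contact manifold that the iterated planar structure naturally provides is the binding $B=\partial P$ of the supporting open book; in dimension five $B$ is a planar contact $3$--manifold (see Lemma~\ref{lem: bind}), and in general it is iterated planar. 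Any induction has to be organized around $B$, or the argument must be run globally on $M$ in the style of Abbas--Cieliebak--Hofer using the pages as boundary conditions for the holomorphic curves. Note also that in the definition of an iterated planar Lefschetz fibration each base is $D^2$, not a general planar surface; planarity is a condition on the fibers at the bottom of the tower, so ``a Lefschetz fibration $f:P\to\Sigma$ over a planar surface'' mislocates the hypothesis.

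Beyond the inductive misstep, the proposal explicitly defers the analytic heart of the argument --- nonemptiness of the moduli space, SFT compactness, and, crucially, the claim that every degeneration produces a genuine closed Reeb orbit of $\alpha$ on $M$ rather than escaping into the binding or collapsing --- to unstated ``model computations,'' and acknowledges that handling these correctly is ``the crux of the argument.'' That crux is precisely where the iterated planar hypothesis must do its work and is the entire content of the theorem. Identifying the ACH scaffolding is a reasonable starting point, but as written the sketch has a wrong inductive target and leaves the decisive steps unargued.
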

Another proof of this result was given by the first author and Moreno, who also proved the following results. 
\begin{theorem}[Acu-Moreno 2018, \cite{AcuMoreno18preprint}]\label{obstr1}
Any symplectic filling of an iterated planar contact manifold must have connected boundary. 
\end{theorem}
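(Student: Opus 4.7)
The plan is to adapt Wendl's holomorphic-foliation argument from the planar three-dimensional case to higher dimensions, using induction on the depth of the iterated planar structure. After a standard deformation we may assume $(X,\omega)$ is a strong symplectic filling. Complete $X$ by attaching the positive symplectization of $(M,\xi)$ to obtain $\widehat X$, and choose an almost complex structure $J$ on $\widehat X$ that is cylindrical on the end and adapted on $M$ to the outermost iterated planar open book. The goal is to produce a connected family of $J$-holomorphic objects which sweeps out $\widehat X$; this forces $\widehat X$, and therefore $X$, to be connected, from which the connectedness of $\partial X = M$ follows at once.

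The inductive key is that the outermost open book has page $P$ that is itself a Weinstein domain carrying a Lefschetz fibration with iterated planar fibers of strictly smaller depth. In the base case (depth one, i.e., dimension three) Wendl's theorem produces an honest Lefschetz fibration $\widehat X \to D^2$ extending the planar open book, whose connected base immediately implies connectedness of the total space. For the inductive step one would construct a relative Lefschetz-type fibration of $\widehat X$ whose regular fibers inherit the iterated planar structure of $P$ with its depth reduced by one. The relevant moduli space consists of $J$-holomorphic curves asymptotic to the Reeb orbits wrapping the binding; one must show it is nonempty, compact, and has connected image in a suitable two-dimensional base. Connectedness then propagates from the three-dimensional filling problem, through each layer of the iteration, up to the ambient completion $\widehat X$.

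The principal obstacle is the pseudoholomorphic curve analysis in dimensions greater than four. Wendl's original proof depends crucially on positivity of intersections and automatic transversality, both phenomena special to four-manifolds. The iterated planar hypothesis is exactly the structural input that lets us bypass this difficulty: at each layer the genuinely hard moduli problem is pushed into a codimension-two symplectic submanifold, and the recursion terminates in an essentially three-dimensional filling problem where the classical techniques do apply. A secondary technical issue is ensuring that the various layered fibrations can be assembled compatibly; this requires choosing the almost complex structure $J$ so that it simultaneously respects every level of the nested Lefschetz structure, and verifying that the codimension-two submanifolds that arise are themselves symplectically filled by the corresponding layer of $\widehat X$. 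Once these technical ingredients are in place, the connectedness conclusion is inherited from the planar base case and transported up the induction to complete the proof.
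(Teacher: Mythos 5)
The paper does not supply a proof of this statement: it is quoted as an external result of Acu--Moreno \cite{AcuMoreno18preprint} and is used here only as a black box. There is therefore no proof in the paper against which your attempt can be compared, and your write-up must be judged on its own terms.

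On those terms, what you have written is a plan, not a proof. You correctly identify the relevant circle of ideas --- complete the filling, choose a cylindrical almost complex structure adapted to the outermost open book, study moduli of punctured holomorphic curves asymptotic to the binding, and lean on the iterated planar hypothesis to make the analysis tractable --- but you explicitly acknowledge, and then leave open, the entire technical core: nonemptiness, compactness, transversality, and connectedness of the moduli space in dimension $>4$, where positivity of intersections and automatic transversality are unavailable. Without these, the argument does not close.

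There is also a more specific gap in the inductive step. You posit ``a relative Lefschetz-type fibration of $\widehat X$ whose regular fibers inherit the iterated planar structure of $P$ with its depth reduced by one.'' That is precisely the higher-dimensional analogue of Wendl's theorem (Item~\eqref{6} in the introduction), and it is not known to hold in dimensions above four, even under the iterated planar hypothesis. Nor is it what Acu and Moreno actually establish: their proof extracts connectedness of $\partial X$ directly from the structure of a moduli space of holomorphic curves in the completion (in the spirit of Lisi--Van Horn-Morris--Wendl spinal open books), \emph{without} producing a Lefschetz fibration on the filling. As written, your proposal substitutes an open problem for the step where the real work needs to happen; you should either find a way to argue connectedness without building the fibration, or flag clearly that the existence of such a fibration is a conjecture you are assuming.
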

\begin{theorem}[Acu-Moreno 2018, \cite{AcuMoreno18preprint}]
An iterated planar contact manifold cannot be a nonseparating weak contact-type hypersurface in any closed symplectic manifold. 
\end{theorem}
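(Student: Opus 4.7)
The plan is to extend the holomorphic curve methods underlying Theorem~\ref{obstr1} to the noncompact symplectic manifold obtained by cutting along the nonseparating hypersurface. Suppose for contradiction that the iterated planar contact manifold $(M,\xi)$ embeds as a nonseparating weak contact-type hypersurface in a closed symplectic manifold $(W,\omega)$. Since $M$ is nonseparating, cutting $W$ open along $M$ yields a compact connected weak symplectic cobordism $W_0$ from $(M,\xi)$ to $(M,\xi)$, with one concave end $M_-$ and one convex end $M_+$. Complete $W_0$ by attaching cylindrical ends $(-\infty,0]\times M$ and $[0,\infty)\times M$ to form a noncompact symplectic manifold $\widehat{W_0}$ with two cylindrical ends, both modeled on the symplectization of $(M,\xi)$.

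Next, choose a compatible almost-complex structure $J$ on $\widehat{W_0}$ that is cylindrical on the ends and adapted to the iterated planar open book of $(M,\xi)$. The argument of Acu and Moreno establishing Theorem~\ref{obstr1} constructs, from the iterated planar structure, a moduli space of $J$-holomorphic curves asymptotic to the pages of the open book; in the single-end cylindrical setting this moduli space foliates the entire symplectic completion. The key step is to carry out the analogous analysis on $\widehat{W_0}$: the holomorphic foliation propagates in from each cylindrical end and, by SFT-style compactness, the two families of curves must globally match up across the compact region $W_0$. The nonseparating hypothesis forces $W_0$ to have nontrivial relative topology (it is not the trivial cobordism $[0,1]\times M$, since otherwise $M$ could be made separating in $W$), and this nontriviality obstructs the global compatibility of the two foliations, yielding the desired contradiction.

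The main obstacle is the pseudoholomorphic curve analysis in $\widehat{W_0}$: one must establish compactness and transversality of the relevant moduli spaces, control the topological type of the curves, and extract a clean obstruction from the mismatch of asymptotic behavior at the positive and negative ends. A purely topological reduction to Theorem~\ref{obstr1} appears infeasible, since the natural cut-and-paste constructions --- such as attaching the Weinstein filling of $M$ provided by the iterated planar structure, or iterating this along the infinite cyclic cover of $W$ dual to $[M] \in H_{2n-1}(W;\mathbb{Z})$ --- always yield either weak fillings with connected boundary or cobordisms with one convex and one concave end, rather than weak fillings with disconnected convex boundary that would directly contradict Theorem~\ref{obstr1}.
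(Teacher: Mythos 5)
The paper does not contain a proof of this statement: it is quoted, with attribution, from the preprint of Acu and Moreno \cite{AcuMoreno18preprint}, so there is no argument in the present text to compare your proposal against. With that caveat, a few remarks on your sketch are in order. Your general direction---cutting along $M$ to obtain a weak cobordism $W_0$ from $(M,\xi)$ to itself, completing with cylindrical ends, and running the holomorphic-foliation argument that underlies Theorem~\ref{obstr1}---is indeed the flavor of argument used in the three-dimensional antecedents (Albers--Bramham--Wendl, Niederkr\"uger--Wendl) and presumably in Acu--Moreno as well. Your observation that a purely formal reduction to the connected-boundary statement is not available is also reasonable: the statement ``every weak filling has connected boundary'' does not by itself say anything about a cobordism with one convex and one concave end, which is what cutting produces.

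However, the crux of your argument is asserted rather than proved, and one of the supporting claims is actually false. You write that ``the nonseparating hypothesis forces $W_0$ to have nontrivial relative topology (it is not the trivial cobordism $[0,1]\times M$, since otherwise $M$ could be made separating in $W$).'' This is incorrect: if $W_0 \cong [0,1]\times M$, then gluing the two boundary components produces a mapping torus $W$ in which $\{0\}\times M$ is still a nonseparating hypersurface, so nonseparation does not force $W_0$ to be topologically nontrivial. Consequently, the proposed obstruction---that ``nontrivial relative topology obstructs the global compatibility of the two foliations''---cannot be the right mechanism, and in any case it is never explained what ``global compatibility'' means or why topology would obstruct it. The actual obstruction in the known arguments is of a different nature: one typically passes to the infinite cyclic cover of $W$ dual to $[M]$ (a noncompact but geometrically bounded symplectic manifold built from infinitely many copies of $W_0$), propagates the holomorphic foliation coming from the iterated planar structure, and derives a contradiction from compactness of the resulting moduli space together with an intersection or energy estimate, not from a topological mismatch of two separately constructed foliations. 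Your proposal therefore identifies the right tools but does not supply the argument that makes them bite.
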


Items~\eqref{4}, \eqref{5}, and the first part of Item~\eqref{2} are  known to be true for iterated planar contact manifolds. 

So far there has been little systematic investigation of how common it is for a contact manifold to be iterated planar. In Section~\ref{IPgeneral}, we show that there are iterated planar contact manifolds  in all odd dimensions (greater than three) and we also show that there is no obstruction to iterated planarity coming from the fundamental group.
\begin{theorem}\label{IPgroups}
Let $G$ be a finitely presented group. Then for each $n\geq 2$, there is an iterated planar contact $(2n+1)$--manifold whose fundamental group is $G$.
\end{theorem}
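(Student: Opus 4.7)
The strategy is to realize $G$ first as the fundamental group of a Stein $4$-manifold admitting a planar Lefschetz fibration, and then to lift to dimension $2n+1$ by a Weinstein product construction.  The preliminary step is a Tietze normalization of the presentation of $G$: starting from $G = \langle g_1,\dots,g_k \mid r_1,\dots,r_\ell\rangle$, introduce auxiliary generators until one obtains an equivalent presentation $\langle x_1,\dots,x_N \mid s_1,\dots,s_M\rangle$ in which each relator is a \emph{subset product} $s_i = x_{j_1}\cdots x_{j_{m_i}}$ of distinct generators with exponent $+1$.  Two Tietze moves suffice: for each inverse $g_i^{-1}$ appearing in a relator, adjoin a generator $g_i'$ and a relation $g_i g_i' = 1$; for each repeated occurrence of a generator in a relator, adjoin a fresh copy with the analogous identifying relation.

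Next I would construct the base $4$-dimensional Weinstein manifold.  Let $\Sigma = \Sigma_{0,N+1}$ be the planar surface with $N+1$ boundary components, so that $\pi_1(\Sigma) \cong F_N$ is freely generated by loops $x_1,\dots,x_N$ encircling $N$ of the boundary components.  For each relator $s_i = x_{j_1}\cdots x_{j_{m_i}}$, choose a simple closed curve $c_i \subset \Sigma$ enclosing exactly the punctures indexed by $\{j_1,\dots,j_{m_i}\}$; up to conjugation $c_i$ represents the word $s_i$ in $\pi_1(\Sigma)$.  By Loi--Piergallini and Akbulut--Ozbagci, the planar Lefschetz fibration $X \to D^2$ with regular fiber $\Sigma$ and vanishing cycles $c_1,\dots,c_M$ has total space a Stein $4$-manifold $X$, and the induced handle decomposition yields $\pi_1(X) = \pi_1(\Sigma)/\langle c_1,\dots,c_M\rangle \cong G$.

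To lift to dimension $2n+1$, set $P := X \times D^{2n-4}$ (so $P = X$ when $n=2$), let $W := P \times D^2 = X \times D^{2n-2}$, and take $(M,\xi) := \partial W$ with its induced contact structure; then $(M,\xi)$ is supported by the open book with page $P$ and trivial monodromy.  The page $P$ is iterated planar: for $n \geq 3$, projecting $P$ onto the last $D^2$-factor of $D^{2n-4} = D^{2n-6} \times D^2$ yields a Lefschetz fibration $P \to D^2$ with no critical points and fiber $X \times D^{2n-6}$, and recursing eventually lands on the planar Lefschetz fibration $X \to D^2$ of the previous paragraph.  Finally $\pi_1(M) = \pi_1(W) = \pi_1(X) = G$: the first equality uses that $W$ is a Weinstein $(2n+2)$-manifold with handles of index $\leq 2$, so for $n \geq 2$ a general-position argument allows every loop in $\partial W$ bounding a $2$-disk in $W$ to bound a $2$-disk in $\partial W$.

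The one genuinely combinatorial step is the Tietze normalization; the obstruction it resolves is that simple closed curves in a planar surface represent, up to conjugacy, precisely the subset-product elements of the free fundamental group, so not every word in $F_N$ is immediately realizable as a vanishing cycle.  Once the presentation has been rewritten so that all relators are subset products, the remaining geometric assembly --- constructing a Stein $4$-manifold from a planar Lefschetz fibration, taking a Weinstein product, and reading off the product open book on its boundary --- is standard.
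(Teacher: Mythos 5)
Your overall strategy coincides with the paper's: produce a Stein $4$--manifold $X$ carrying a planar Lefschetz fibration with $\pi_1(X)\cong G$, form the Weinstein product $X\times D^{2n-2}$, take its contact boundary with the product open book, and observe iterated planarity plus $\pi_1\cong G$. The product step, the recursive (trivial) Lefschetz fibrations on the pages, and the general-position argument that $\pi_1(\partial W)\cong\pi_1(W)$ for a subcritical Weinstein $W$ of dimension $2n+2\geq 6$ are all correct and match what the paper does. The difference is that the paper simply cites Proposition~6.1 of Ghiggini--Golla--Plamenevskaya \cite{GhigginiGollaPlamenevskaya17pre} for the existence of such an $X$, whereas you attempt to construct it from scratch, and that is where your argument has a gap.

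Your key claim --- that a simple closed curve in $\Sigma_{0,N+1}$ enclosing the punctures indexed by $\{j_1,\dots,j_m\}$ represents $x_{j_1}\cdots x_{j_m}$ up to conjugacy --- is false unless those punctures are consecutive in the chosen planar arrangement. With the punctures placed along an arc, the curve enclosing punctures $1$ and $3$ while skipping $2$ is freely homotopic to $x_1x_2x_3x_2^{-1}$ (or $x_1x_2^{-1}x_3x_2$, depending on which side of hole~$2$ the curve passes), a cyclically reduced word of length $4$, hence not conjugate to $x_1x_3$. In general such a curve represents a conjugate of $\prod_{j\in S}w_jx_jw_j^{-1}$, a product of conjugates of the $x_j$, not a conjugate of $\prod_{j\in S}x_j$, and the resulting normal closures differ. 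So the quotient $\pi_1(\Sigma)/\langle\langle c_1,\dots,c_M\rangle\rangle$ you obtain need not be the group your normalized presentation defines. Your Tietze moves eliminate inverses and repeated letters but do nothing to make all relators simultaneously consecutive runs of punctures; this is a ``consecutive-ones'' constraint on a single linear ordering that can already fail for $\Z^2=\langle a,b\mid aba^{-1}b^{-1}\rangle$ after your normalization (the relators $aa'$, $bb'$, and $aba'b'$ cannot all be consecutive in one ordering of $a,a',b,b'$). You would need either a substantially cleverer choice of normalization and curve configuration, or, as the paper does, an appeal to the known result of \cite{GhigginiGollaPlamenevskaya17pre}.
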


\begin{remark} 
It is clear that for $n \geq 3$, any finitely presented group $G$ can be realized as the fundamental group of a contact $(2n+1)$--manifold, since there exists a closed orientable $(n+1)$--manifold $X$ with $\pi_1(X)=G$ and the unit cotangent bundle $ST^*X$ equipped with its canonical contact structure is a contact  $(2n+1)$--manifold with $$\pi_1(ST^*X) \cong \pi_1(X) \cong G.$$  This argument obviously breaks down for $n=2$. Nevertheless, A'Campo and Kotschick \cite{ACampoKotschick94}  observed that any such $G$  can be realized as the fundamental group of a contact $5$--manifold, via performing surgery on the contact $5$--manifold $\#_k (S^1 \times S^4, \xi_{std})$, where $k$ is the number of generators in the presentation of $G$ and $\xi_{std}$ is induced by the Stein $6$--manifold $S^1 \times B^5$. Notice that Theorem~\ref{IPgroups} gives an alternate uniform proof of these results as well using a different method. 
\end{remark} 

Iterated planarity is preserved under connected sums.
\begin{theorem} \label{IPconnect}
The connected sum of iterated planar contact manifolds is iterated planar. 
\end{theorem}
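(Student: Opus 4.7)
The plan is to reduce the statement to a claim about iterated planar Weinstein pages and then induct on the dimension of those pages. The key geometric input is a compatibility between contact connected sum and boundary connected sum of pages, together with a compatibility between boundary connected sum of pages and the Lefschetz-fibration structures on them.

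First I would recall that if contact manifolds $(M_i,\xi_i)$ are supported by open books with pages $P_i$ and monodromies $\phi_i$ for $i=1,2$, then the contact connected sum $(M_1\#M_2,\xi_1\#\xi_2)$ is supported by the open book whose page is the boundary connected sum $P_1\natural P_2$ and whose monodromy is $\phi_1\cup \phi_2$, where each $\phi_i$ is applied to $P_i\subset P_1\natural P_2$ and extended by the identity across the connecting Weinstein $1$--handle. This is the higher dimensional analogue of the familiar three dimensional construction and follows from standard Weinstein/Murasugi techniques. It therefore suffices to prove that if $P_1,P_2$ are iterated planar Weinstein pages, so is $P_1\natural P_2$.

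For this, the geometric lemma I would use is the following: if $P_i$ is a Weinstein $2n$--manifold admitting a Lefschetz fibration $f_i\colon P_i\to D^2$ with regular fiber $F_i$ and vanishing cycles $V_i\subset F_i$, then $P_1\natural P_2$ admits a Lefschetz fibration $f\colon P_1\natural P_2\to D^2$ with regular fiber $F_1\natural F_2$ and vanishing cycles $V_1\cup V_2$, each pushed into its half of the fiber. The construction is to place $f_1$ and $f_2$ side by side inside a bigger disk and attach a subcritical Weinstein $1$--handle whose core joins a fixed regular fiber of $f_1$ to one of $f_2$ and which carries the trivial fibration over an arc; this realizes the boundary connected sum on total spaces and on fibers simultaneously.

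With this lemma in hand, the induction is on the dimension of the page. The base case is two dimensional pages, where the boundary connected sum of planar surfaces is planar, so the claim is immediate. For the inductive step, assume $P_1,P_2$ are iterated planar of dimension $2n\geq 4$, so each $P_i$ carries an iterated planar Lefschetz fibration with iterated planar fiber $F_i$. By the lemma, $P_1\natural P_2$ carries a Lefschetz fibration with fiber $F_1\natural F_2$, and by induction $F_1\natural F_2$ is iterated planar. Hence $P_1\natural P_2$ is iterated planar by definition, which together with the first step proves that $(M_1\#M_2,\xi_1\#\xi_2)$ is iterated planar.

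The main obstacle is verifying the geometric lemma cleanly. One needs to check that the side-by-side configuration together with the $1$--handle really yields a Lefschetz fibration with the claimed regular fiber, and that the Weinstein/contact data on $P_1\natural P_2$ (and, via the first step, on $M_1\#M_2$) is the expected one. All of this is standard in spirit, but careful bookkeeping is required to identify the Weinstein $1$--handle attachment on the total space with a $1$--handle attachment inside a chosen regular fiber, and to confirm that no new vanishing cycles are introduced across the handle.
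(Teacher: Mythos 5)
Your proposal is correct and follows essentially the same route as the paper: reduce to the claim that the boundary connected sum of iterated planar pages is iterated planar (citing that this page, with the monodromies extended by the identity over the $1$--handle, supports the contact connected sum), prove a lemma that boundary connected sum of Lefschetz fibrations yields a Lefschetz fibration whose fiber is the boundary connected sum of the original fibers, and iterate down to the planar surface base case. The paper's version of your lemma is its Lemma~\ref{lem: LFconnect}, which realizes the $2n$--dimensional $1$--handle as a $D^2$--family of $(2n-2)$--dimensional Weinstein $1$--handles attached fiberwise along the horizontal boundary, matching your "side-by-side" picture.
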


We now turn our attention to contact $5$--manifolds. We begin by recalling that in general, a subcritical Stein filling of a contact $(2n-1)$--manifold is a Stein filling that does not have handles of index $n$. 

\begin{theorem}\label{IPsubcrit1}
The following subcritically Stein fillable contact $5$--manifolds are iterated planar.
\begin{enumerate}
\item The contact structure on $S^1\times S^4$ filled by a Stein structure on $S^1\times D^5$ is iterated planar.\footnote{This contact structure is the unique one on $S^1\times S^4$ admitting a symplectically aspherical subcritical Stein filling,  see \cite{BarthGeigesZehmisch19}.}
\item Consider a $5$--manifold $M$ with a finite cyclic fundamental group (including the trivial group). If the order of the group is odd, then any   subcritically Stein fillable contact structure on $M$ is iterated planar. If the order of the group is even,  then at least "half" of the subcritically Stein fillable contact structures on $M$ are iterated planar.
\item If $(M, \xi)$ is a subcritically Stein fillable contact $5$--manifold, then $M$ admits a subcritically Stein fillable iterated planar contact structure $\xi'$ in the same homotopy class of almost contact structures. Moreover,  if $M$ has infinite second cohomology then it has infinitely many such contact structures.  
\end{enumerate}
\end{theorem}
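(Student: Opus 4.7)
My plan rests on Cieliebak's splitting theorem and a repeated use of Theorem~\ref{IPconnect}. The common reduction is the following: by Cieliebak's theorem, every subcritical Stein filling $X^{6}$ of $(M,\xi)$ is Stein-deformation equivalent to a product $W^{4}\times \mathbb{C}$ for some Stein surface $W^{4}$, and this splitting induces on $M=\partial X$ a natural open book with page $W^{4}$ and identity monodromy. Hence in each part, it suffices to exhibit a positive allowable Lefschetz fibration (PALF) of the appropriate $W^{4}$ over $D^{2}$ with planar (genus zero) fibers.

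For part (2), the unique symplectically aspherical subcritical Stein filling of $S^{1}\times S^{4}$ is $X=S^{1}\times D^{5}$, and writing $X=(S^{1}\times D^{3})\times D^{2}$ identifies the Cieliebak page as $W^{4}=S^{1}\times D^{3}\cong(S^{1}\times[0,1])\times D^{2}$. This already exhibits $W^{4}$ as a trivial Lefschetz fibration over $D^{2}$ whose fibers are annuli, hence a planar PALF, and the induced open book on $S^{1}\times S^{4}$ is iterated planar.

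For part (1), the simply connected case uses Barden's classification together with subcriticality to reduce $M$ to $\#_{k}(S^{2}\times S^{3})$. Each summand can be realized as the boundary of a subcritical Stein filling whose Cieliebak page is a negative disk bundle $\mathcal{O}(-1)\to S^{2}$, which carries a planar PALF with a single Lefschetz critical point from the attaching Legendrian unknot; Theorem~\ref{IPconnect} then handles the connect sum. For finite cyclic $\pi_{1}$ of order $p$, Cieliebak's $W^{4}$ satisfies $\pi_{1}(W^{4})=\mathbb{Z}/p$, and a classification in the spirit of Barth--Geiges--Zehmisch realizes $W^{4}$ as a linear plumbing of negative disk bundles, each carrying a standard planar PALF. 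When $p$ is even, the parity of $c_{1}$ of the filling leaves a residual $\mathbb{Z}/2$ ambiguity in the almost contact class on $M$, and the planar construction reaches this ambiguity only on one side, which accounts for the ``at least half'' count. For part (3), where $\xi$ may be changed inside its almost contact class, I would modify $W^{4}$ by attaching cancelling pairs of Weinstein $1$- and $2$-handles so that the new Stein surface $W'$ is a boundary connect sum of the planar building blocks from part (1); one arranges the attaching data so that the diffeomorphism type of $M$ and the almost contact class on $\partial$ are both preserved. When $H^{2}(M)$ is infinite, varying the homology class carried by the cancelling $2$-handle produces infinitely many pairwise inequivalent such $W'$, and hence infinitely many iterated planar subcritically Stein fillable contact structures.

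The main obstacle I anticipate lies in the classification step inside part (1) for general cyclic $\pi_{1}$: matching each subcritically fillable almost contact class on $M$ to an explicit $W^{4}$ that carries a planar PALF, and, in particular, proving the ``at least half'' count for even $p$ via a careful spin and $c_{1}\bmod 2$ analysis of the filling. This parity bookkeeping between the filling and its boundary is where I expect the bulk of the technical work to lie.
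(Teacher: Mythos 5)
Your high-level reduction (Cieliebak splitting, then exhibit a planar Lefschetz fibration on the $4$--dimensional Cieliebak page $W^4$, then use Theorem~\ref{IPconnect} for connected sums) matches the paper's strategy, and your treatment of part~(2) is essentially the paper's. For parts~(1) and~(3), however, there are concrete gaps.

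In part~(1), for cyclic $\pi_1$ the relevant classification is that of Ding--Geiges--Zhang (not a plumbing statement), and the Cieliebak page the paper constructs is \emph{not} a linear plumbing of negative disk bundles: it is a $4$--manifold $W_k$ built from a single $1$--handle and a single $2$--handle running $n$ times over it (a thickening of a punctured lens space with a framing shift), whose boundary is a small Seifert fibered space. For such a $W_k$ there is no ``standard planar PALF'' to invoke; the planarity of the contact boundary $\partial W_k$ is a nontrivial fact, obtained from Lisca--Stipsicz's classification of tight contact structures with zero twisting on small Seifert fibered spaces, after which Lemma~\ref{lem: bind} (via Wendl's theorem for $4$--manifolds) upgrades a planar binding to an iterated planar page. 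You also omit the possible non-spin summand $S^2\widetilde\times S^3$ (handled in the paper by taking the Legendrian unknot's Thurston--Bennequin invariant of the other parity), and the realization of \emph{all} Chern classes (handled by varying the rotation number). Your ``at least half'' parity heuristic is in the right spirit, but without the preceding two inputs it doesn't constitute a proof.

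In part~(3) the gap is more fundamental. The paper's argument hinges on Onaran's result (Lemma~\ref{lem: legplanar}): \emph{every topological link in $\#_k S^1\times S^2$ has a Legendrian representative on a page of a planar open book} supporting the tight contact structure. That is what lets one replace the Weinstein $2$--handle attaching link $L$ by a smoothly isotopic Legendrian $L'$ on a planar page, stabilize to match framing parities and rotation numbers (so that $X'\times D^2 \cong X\times D^2$ and the almost contact classes agree), and conclude that $X'$ has a planar Lefschetz fibration. Your proposal of attaching cancelling Weinstein $1$--/$2$--handle pairs to ``reduce to boundary connect sums of the building blocks from part~(1)'' doesn't address why the result would carry a planar PALF, nor how to arrange both the diffeomorphism type of $\partial$ and the almost contact class to be preserved; without the Onaran lemma there is no mechanism forcing planarity. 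Likewise, for the ``infinitely many'' conclusion the paper varies the rotation numbers of $L'$ (changing the Chern class of $X'\times D^2$), whereas your suggestion of varying ``the homology class carried by a cancelling $2$--handle'' is not a well-posed move for a cancelling pair.
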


\begin{remark} The proof of (2) above, relies on the Ding-Geiges-Zhang classification \cite{DingGeigesZhang18} of subcritically Stein fillable contact $5$--manifolds with finite cyclic fundamental group of order $m$.  In particular, when $m$ is even, there is a unique element of $2$--torsion in $H^2(M; Z)$;  the (nonsurjective) map from homotopy classes of almost contact structures to
their Chern class in $H^2(M;Z)$ is two-to-one, and hence it makes sense to talk about ``half" of the subcritically Stein fillable contact structures on $M$.

We also note that a $5$--manifold $M$ can have a subcritically Stein  fillable contact structure $\xi$ and another contact structure $\xi'$ that is not subcritically Stein fillable. In general,  we can say nothing about the iterated planarity of $\xi'$, but we expect there to be cases where it is not.  For example, let $X$ be the Stein manifold obtained form $B^4$ by attaching a Stein $2$--handle along  the maximal Thurston-Bennequin right-handed Legendrian torus knot in $(S^3, \xi_{std})$, and let $(M,\xi)$ be the boundary of the subcritical Stein manifold $X\times D^2$.  We observe that the open book with page $X$ and trivial monodromy, that supports  $(M, \xi)$, is not iterated planar because otherwise the planar contact manifold $\partial X$ would have a filling $X$ with $b_2^0 (X)=1$, which contradicts the result discussed  in Item~\eqref{2} above. Although this does not yet prove that the contact $5$--manifold  $(M, \xi)$ is not iterated planar, we present some further supporting evidence.  Notice that any symplectically aspherical filling of $(M,\xi)$ must be diffeomorphic to $X\times D^2$ by \cite[Theorem~1.5]{BarthGeigesZehmisch19},  and it is hard to see how any open book induced from a Lefschetz fibration on $X\times D^2$ could be iterated planar.  Thus we conjecture that  $(M, \xi)$ is not iterated planar. One can clearly create infinitely many other such examples as well by attaching a Stein $2$--handle to $B^4$ along any Legendrian knot in $(S^3, \xi_{std})$ with Thurston-Bennequin invariant larger than zero. This discussion leads to the following question. 
\end{remark}

\begin{question}
Are all subcritically Stein fillable contact manifolds iterated planar? Is there some criteria that determines when they are?
\end{question}

We note that in dimension 3, a subcritically Stein fillable contact structure is the connect sum of some number of copies of $S^1\times S^2$ with its unique tight contact structure. This is well-known to be planar, so the above question asks if this observation generalizes to higher dimensions. 

On the other hand, there are many non-subcritically Stein fillable contact manifolds which are iterated planar. 
\begin{theorem}\label{felxibleIP}
If $(M^5, \xi)$ is an iterated planar contact $5$--manifold with $c_1(\xi)=0$, and has a flexible Stein filling, then $M$ admits infinitely many iterated planar contact structures (in the same homotopy class of almost contact
structures) which are Stein  fillable but not subcritically Stein fillable.
\end{theorem}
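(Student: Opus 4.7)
The plan is to construct the required contact structures on $M$ as contact connected sums of $(M,\xi)$ with non-trivial iterated planar contact structures on $S^5$ supported by non-subcritical Stein fillings. Since $M\# S^5\cong M$ smoothly, this yields new contact structures on $M$ itself, and iterated planarity is preserved by Theorem~\ref{IPconnect}, so the bulk of the work reduces to building suitable auxiliary pieces on $S^5$.

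First, for each $n\geq 1$, I would construct an iterated planar contact structure $\eta_n$ on $S^5$ admitting a non-subcritical Stein filling $V_n$ of real dimension $6$. A natural candidate is obtained from $B^6$ by attaching $n$ pairs of topologically cancelling Weinstein $2$- and $3$-handles, with the attaching Legendrian $2$-spheres for the $3$-handles chosen \emph{tight} (not loose), so that the symplectic cancellation fails while the smooth cancellation holds. Iterated planarity of $\eta_n$ should then follow by reading off an explicit supporting open book on $S^5$ from the handle decomposition, whose page admits a planar Lefschetz fibration whose vanishing cycles are determined by the Weinstein handles attached.

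Second, I would form the contact connected sums $\xi_n := \xi\#\eta_n$ on $M\#S^5\cong M$. By Theorem~\ref{IPconnect}, each $\xi_n$ is iterated planar. The boundary-connect-sum $W_n := W\natural V_n$ is a Stein filling of $(M,\xi_n)$ that inherits the critical $3$-handles of $V_n$, hence is not subcritical. The hypothesis $c_1(\xi)=0$ is used here to ensure compatibility of first Chern class constraints across the boundary connect sum (and to guarantee that the almost contact class of $\xi_n$ can be arranged as desired).

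Finally, to distinguish the $\xi_n$ pairwise, I would compare their Stein fillings $W_n$: the number of critical handles in the Weinstein decomposition of $V_n$ varies with $n$, and via the Cieliebak–Eliashberg classification of flexible Weinstein manifolds applied to the original filling $W$, the topological differences among the $V_n$ should propagate to contact invariants distinguishing the $\xi_n$ on $M$. The main obstacle is the first step: producing iterated planar contact structures $\eta_n$ on $S^5$ with non-subcritical Stein fillings requires both a delicate handle-theoretic argument (to ensure genuine non-subcriticality via tight attaching Legendrians) and an explicit open book description compatible with iterated planarity. It is here that the flexibility of the ambient $W$ is expected to play a subtle technical role, controlling the resulting contact topology on $M$ after the connected sum so that iterated planarity is preserved uniformly in $n$.
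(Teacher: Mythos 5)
Your high-level strategy—take contact connected sums of $(M,\xi)$ with non-trivial iterated planar contact structures on $S^5$ and appeal to Theorem~\ref{IPconnect}—matches the paper's. But the execution has a fundamental gap, and it's exactly at the step where you wave your hands.

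The key issue is how to prove the resulting contact structures $\xi_n$ on $M$ are \emph{pairwise distinct}. The paper's engine is a theorem of Lazarev (\cite[Theorem~1.9]{Lazarev16pre}): if $\xi$ has $c_1(\xi)=0$ and a \emph{flexible} Stein filling $W$, then for flexible fillings $W_1, W_2$ of $S^5$ with distinct homology, the induced contact structures on $\partial(W\natural W_1)$ and $\partial(W\natural W_2)$ are distinct. This is precisely why the hypotheses $c_1(\xi)=0$ and flexibility of $W$ appear in the statement, and it forces the auxiliary fillings of $S^5$ to also be \emph{flexible}. Your proposal does the opposite: you attach $3$-handles along \emph{tight} (non-loose) Legendrian spheres precisely so that ``symplectic cancellation fails.'' The resulting $V_n$ are then not flexible, $W\natural V_n$ is not flexible, and Lazarev's theorem does not apply. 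Your fallback—invoking the Cieliebak--Eliashberg h-principle—does not help: that h-principle shows flexible Weinstein structures with the same topology are deformation equivalent, but it does not produce a contact invariant distinguishing boundaries of fillings with different topology, which is what you need. You have no mechanism left to show the $\xi_n$ are distinct.

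There is a subtler point you also miss. The paper starts from the Ustilovsky fillings $X_{2k+1}$ of $S^5$ (pages $DT^*S^2$, monodromy a power of a Dehn twist), which are non-subcritical because $H_3(X_{2k+1};\Z)$ has rank $k-1$. To make Lazarev's theorem applicable, the paper must replace these with \emph{flexible} fillings $X'_{2k+1}$ by stabilizing the attaching Legendrian spheres to loose ones via the Honda--Huang $S\uplus Z$ construction—and then, crucially, re-verify that the new boundary contact structure on $S^5$ is still iterated planar, which is the content of Lemma~\ref{lem: Lagrelpage} (realizing $S\uplus Z$ as a Lagrangian on a page so the resulting open book has a lens-space binding, hence is iterated planar by Lemma~\ref{lem: bind}). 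In your proposal, verifying iterated planarity of the $\eta_n$ built from tight attaching spheres is left as ``reading off an explicit supporting open book''; but without a genuine argument analogous to Lemma~\ref{lem: Lagrelpage} there is no reason the page admits a planar Lefschetz fibration. In short: the tension is real—you need enough flexibility to apply Lazarev, yet enough control over the open book to retain iterated planarity—and the paper's $S\uplus Z$ device is the tool that resolves both at once.
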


A Stein manifold is flexible if it admits a handle decomposition with all the critical handles attached along loose Legendrian spheres \cite{Murphy??}. In particular, a subcritical Stein manifold is flexible, by definition. Thus combining Theorem~\ref{IPsubcrit1} (3)  and Theorem~\ref{felxibleIP},  we conclude that if a $5$--manifold $M$ with infinite second cohomology admits a subcritically Stein fillable contact structure with $c_1=0$, then there are infinitely many iterated planar contact structures on $M$  which are not subcritically Stein fillable. There are many $5$--manifolds satisfying these assumptions. For example, if $X$ is the result of attaching a Stein $2$--handle to $B^4$ along a Legendrian knot with rotation number zero,  then the boundary of the subcritical Stein $6$-manifold $X\times D^2$ is a contact $5$--manifold with infinite second cohomology such that $c_1=0$.

We now observe that Item~\eqref{8} above does not hold for iterated planar contact manifolds.
\begin{proposition}\label{fillablebutnoteStein}
There are  iterated planar contact manifolds that are strongly symplectically fillable but not Stein fillable. 
\end{proposition}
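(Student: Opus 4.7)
The plan is to exhibit an explicit iterated planar contact $5$-manifold $(M,\xi)$ that carries a strong symplectic filling but admits no Stein filling, proceeding in three stages: construction, production of the strong filling, and obstruction to Stein fillability.

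For the construction, I would begin with a planar Stein Lefschetz fibration $P$ — a Weinstein $4$-manifold whose open book boundary is a planar contact $3$-manifold — and build a $5$-dimensional open book with page $P$ and a monodromy $\phi$ that is a product of positive Dehn twists along Lagrangian spheres in $P$. The higher-dimensional Giroux correspondence then furnishes a contact structure $\xi$ on the total space $M$, and iterated planarity is immediate because the page is a planar Lefschetz fibration. To obtain a strong symplectic filling, I would embed $(M,\xi)$ as a separating contact-type hypersurface in a closed symplectic $6$-manifold $X$ — using, for instance, a symplectic cap of the kind discussed later in the paper — and take $W$ to be the compact inside piece. The filling $W$ is strong by construction, and by choosing $X$ so that $W$ contains a symplectic surface of negative self-intersection or has nontrivial $b_4^+$, we ensure $W$ is not Stein.

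The most delicate step is to rule out \emph{any} Stein filling $W'$ of $(M,\xi)$, not just the one we constructed. In dimension three this is normally accomplished via the Heegaard--Floer contact invariant (item~\eqref{9}), which has no sharp higher-dimensional analog. I would instead combine two ingredients: first, the topological constraint that a Stein $6$-manifold is homotopy equivalent to a $3$-complex, so $H_i(W')=0$ for $i\ge 4$, which together with Poincar\'e--Lefschetz duality restricts the possible topology of $W'$; and second, a symplectic cohomology (or contact homology) computation adapted to iterated planar open books, which pins down a well-defined invariant of any Weinstein filling and can be shown to disagree with what our $(M,\xi)$ must have. I expect this third step to be the main obstacle, since the absence of an Ozsv\'ath--Szab\'o-type contact class in higher dimensions forces us to produce the obstruction by hand, and the construction of $\phi$ in step one must be tuned precisely to make this final obstruction go through.
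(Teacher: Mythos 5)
Your proposal has a fundamental internal contradiction in Step~1 that undermines the whole plan. You propose to build the open book with a monodromy $\phi$ that is ``a product of positive Dehn twists along Lagrangian spheres in $P$.'' But an open book with Weinstein page and monodromy a product of positive Dehn twists is, by the very mechanism of Construction~\ref{theconstruction}, the boundary of a Weinstein Lefschetz fibration obtained by attaching Weinstein handles along those Lagrangian spheres to $P\times D^2$. In other words, your construction automatically produces a Stein (Weinstein) fillable contact manifold, which is exactly what you need to avoid. To get a non--Stein-fillable contact manifold you must use a monodromy that is \emph{not} factorable into positive Dehn twists along Lagrangian spheres of the page.

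Step~3 is also not a proof: you correctly identify that you must rule out \emph{all} Stein fillings, flag this as ``the main obstacle,'' and then gesture at symplectic cohomology or contact homology without carrying out any computation. As written this is a research program, not an argument.

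The paper sidesteps both difficulties with a single move. It takes the page to be $W^4_d$, the complement of a degree-$d$ symplectic hypersurface in $\C P^2$, with monodromy the \emph{fibered} Dehn twist $\tau_d$ (not a product of Lagrangian-sphere Dehn twists). The resulting open book supports the Boothby--Wang contact structure $\xi_0$ on the lens space $L^5_d = S^5/\Z_d$, which is strongly fillable by the associated disk bundle. The non--Stein-fillability is then \emph{topological}: by Bowden--Crowley--Stipsicz, for $d>1$ the $5$-manifold $L^5_d$ carries no Stein fillable contact structure whatsoever, so no contact-homology computation is needed. Iterated planarity is verified at $d=2$ via Lemma~\ref{lem: bind}: $W^4_2\cong DT^*\R P^2$, whose convex boundary is the planar lens space $L(4,1)$. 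The key idea you are missing is to outsource the Stein-fillability obstruction to a classification result about the underlying smooth $5$-manifold rather than trying to produce the obstruction by hand on the contact structure.
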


Turning to the opposite end of the spectrum we consider overtwisted contact structures.
\begin{theorem}\label{ot}
If $M$ is a $5$--manifold with finite cyclic fundamental group of odd order (including the trivial group) that is the boundary of a subcritical Stein domain, then every overtwisted contact structure on $M$ is iterated planar.

More generally, if $M$ is any $5$--manifold that admits a subcritically fillable contact structure, then it admits an overtwisted contact structure that is iterated planar. If $H^2(M;\Z)$ is infinite, then there are infinitely many homotopy classes of almost contact structures for which the overtwisted contact structure in any of these classes is iterated planar.
\end{theorem}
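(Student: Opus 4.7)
The plan combines three inputs: Theorem~\ref{IPsubcrit1}, which yields iterated planar subcritically Stein fillable contact structures on $M$; Theorem~\ref{IPconnect}, the preservation of iterated planarity under contact connect sum; and the Borman--Eliashberg--Murphy (BEM) classification, according to which each almost contact homotopy class contains a unique isotopy class of overtwisted contact structure. With these in hand, the theorem reduces to exhibiting, in each relevant almost contact class on $M$, \emph{some} iterated planar open book supporting an overtwisted contact structure; BEM then forces the OT representative of that class to be iterated planar.

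The central lemma I would prove is that every overtwisted contact structure on $S^5$ is iterated planar. I would start from the trivially iterated planar open book for $(S^5,\xi_{std})$ with page $D^4$ and trivial monodromy, and modify it to support overtwisted contact structures in every almost contact homotopy class on $S^5$, using higher-dimensional analogues of negative stabilization and the open book descriptions of loose/overtwisted contact structures due to Casals--Murphy--Presas and related work. These modifications must be arranged to preserve the iterated planar Lefschetz structure at each level of the tower; for instance, a negative stabilization can be realized by attaching a handle to the page that contributes only a planar factor.

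For the first assertion, the finite cyclic odd-order $\pi_1$ hypothesis lets Theorem~\ref{IPsubcrit1}(1) supply an iterated planar contact structure $\xi_0$ on $M$. Given any almost contact class $\alpha$ on $M$, the almost contact classes differ from $[\xi_0]$ by classes realizable as almost contact classes on $S^5$; choose an overtwisted $\eta$ on $S^5$ in the corresponding class so that $\xi_0 \# \eta$ lies in $\alpha$. Then $\xi_0 \# \eta$ is iterated planar (by Theorem~\ref{IPconnect} and the lemma) and overtwisted, so by BEM it represents the unique OT structure in $\alpha$, and hence \emph{every} OT structure in $\alpha$ is iterated planar. The second assertion follows similarly from Theorem~\ref{IPsubcrit1}(3) combined with the connect-sum construction: a single iterated planar $\xi_0$ connect-summed with any iterated planar overtwisted structure on $S^5$ provides the required iterated planar OT structure on $M$. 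For the third assertion, when $H^2(M;\Z)$ is infinite, I would either vary the $S^5$ summand or perform (half-)Lutz twists of $\xi_0$ along transverse codimension-two contact submanifolds whose cohomology classes range over an infinite set --- these change the almost contact class and yield overtwisted structures --- and verify that such twists preserve iterated planarity when performed along bindings that fit naturally into the iterated planar structure.

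The primary obstacle is the $S^5$ lemma and the associated verification that the open-book operations involved (negative stabilization, Lutz twists) preserve iterated planarity at every level of the iterated Lefschetz tower. A secondary technical point is arranging the connect-sum or Lutz-twist construction to reach every desired almost contact class on $M$, particularly for the third assertion where infinitely many distinct classes are required.
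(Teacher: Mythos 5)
Your plan correctly identifies the three key inputs — Theorem~\ref{IPsubcrit1}, Theorem~\ref{IPconnect}, and the uniqueness of overtwisted structures in a given homotopy class of almost contact structures — and correctly reduces the problem to exhibiting an iterated planar overtwisted representative in each relevant almost contact class. Your $S^5$ lemma is also essentially right (and, since $S^5$ is simply connected with $H^2(S^5;\Z)=0$, it has a \emph{unique} homotopy class of almost contact structures, hence a unique overtwisted contact structure up to isotopy; realizing it as the negative stabilization of the trivial open book $(D^4,\mathrm{id})$ produces an open book with page $DT^*S^2$, which is iterated planar, so the lemma reduces to a one-line observation rather than the elaborate program you sketch).

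However, the core of your argument for the first assertion breaks down. You extract a single iterated planar $\xi_0$ from Theorem~\ref{IPsubcrit1}(1) and then attempt to reach an arbitrary almost contact class $\alpha$ on $M$ by choosing the overtwisted $S^5$ summand $\eta$ in ``the corresponding class.'' But $S^5$ carries only \emph{one} homotopy class of almost contact structures, so there is only one $\eta$ to choose, and $\xi_0\#\eta$ always lies in the same almost contact class as $\xi_0$ (this is exactly what Casals--Murphy--Presas gives you). You therefore never leave the class $[\xi_0]$, and the argument proves the claim only for that single class. The fix is to vary the base contact structure on $M$, not the $S^5$ summand: the Ding--Geiges--Zhang classification invoked in the proof of Theorem~\ref{IPsubcrit1} guarantees that \emph{every} homotopy class of almost contact structures on $M$ is realized by a (unique) subcritically Stein fillable $\xi$, and Theorem~\ref{IPsubcrit1}(1) says each such $\xi$ is iterated planar. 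Connect-summing each with the unique $(S^5,\xi_{OT})$, using Casals--Murphy--Presas and Theorem~\ref{IPconnect}, then puts an iterated planar overtwisted structure in every almost contact class, and uniqueness finishes the argument. The same correction applies to your treatment of the second and third assertions: the infinitude in the third assertion comes from the infinitely many distinct iterated planar contact structures produced by Theorem~\ref{IPsubcrit1}(3), each summed with the single $\xi_{OT}$ on $S^5$, not from Lutz twists or from varying the sphere summand.
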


For example, it follows from the first part of Theorem~\ref{ot} that the unique overtwisted contact structure on $S^5$, and any overtwisted contact structure on $S^2\times S^3$ and $S^2\widetilde{\times} S^3$ (the nontrivial $S^3$--bundle over $S^2$) are  iterated planar, since each of these simply-connected $5$--manifolds is the boundary of a subcritical Stein domain. Similarly, the second part of Theorem~\ref{ot} implies that the {\em unique} overtwisted contact structure on $S^1 \times S^4$  is iterated planar, since  $S^1 \times S^4$  admits a subcritically fillable contact structure. This gives some evidence that the analog of Item~\eqref{1} may be true, so we ask:
\begin{question}
Is every overtwisted contact manifold iterated planar?
\end{question}

Another natural class of $5$--manifolds to consider is the simply-connected ones. In \cite{Barden65}, Barden gave a classification of closed, oriented, smooth, simply-connected $5$--manifolds, up to diffeomorphism. Note that the vanishing of the third integral Stiefel-Whitney class $W_3(M)$ is a necessary condition for the existence of an almost contact structure on a (not necessarily simply-connected) $5$--manifold $M$, and Massey \cite{Massey61} showed that it is also sufficient. Moreover, Geiges \cite{Geiges91} proved that if $M$ is simply-connected, then it admits a contact structure in every homotopy class of almost contact structures. These results imply that any simply-connected  $5$--manifold which carries a contact structure can be uniquely decomposed into the connected sum of prime manifolds $M_k$ for $1 \leq k \leq \infty$, with possibly one extra summand $X_{\infty}=S^2 \widetilde{\times} S^3$. Here $M_1=S^5$, $M_{\infty}=S^2 \times S^3$, and $H_{2}(M_k; \Z)=\Z_k \oplus \Z_k$ for $1 \leq k < \infty$. 

The results above imply that all overtwisted contact structures on $M_1=S^5$, $M_\infty=S^2\times S^3$, and $X_\infty=S^2 \widetilde{\times} S^3$ are iterated planar and each of these manifolds has infinitely many Stein fillable contact structures that are iterated planar. 

\begin{question}
Are all the contact structures on $S^5$, $S^2\times S^3$, and $S^2 \widetilde{\times} S^3$ iterated planar?
\end{question}
\begin{question} \label{ques: mk} For $2\leq k<\infty$, does the manifold $M_k$  admit any iterated planar contact structures?
\end{question}

We discuss Question~\ref{ques: mk} further in Section~\ref{IP5D} and suggest that $M_k$ admits a contact structure that we conjecture is not iterated planar. 

Many of the other items listed above for planar contact manifolds follow from finding symplectic caps for these manifolds. While we do not have specific results to mention here, below we will discuss some methods to build symplectic cobordisms (that might be a step towards building symplectic caps) and build symplectic caps for a very restricted class of iterated planar contact $5$--manifolds. We note that recently, Conway and the second author \cite{ConwayEtnyre20} and, independently, Lazarev \cite{Lazarev20b} showed that strong symplectic caps exist for arbitrary contact manifolds, but there is little one can guarantee about the topology of these caps, and hence it does not seem likely they can be used to study iterated planar contact manifolds as the caps in Item~\eqref{3} were used to restrict planar contact manifolds. 

We have seen that there are many contact manifolds that are iterated planar, but we end this section by pointing out that not all higher dimensional contact manifolds are iterated planar.  In \cite{MassotNiederkrugerWendl12}, Massot, Niederkr\"uger, and Wendl gave examples of symplectic manifolds with disconnected weakly convex boundary. It follows that none of the boundary components of these examples are  iterated planar by  Theorem~\ref{obstr1}.

\subsection{Projective open books and adapted contact manifolds}\label{sec: pobs}
There is another possible generalization of planar open books to higher dimensions. The idea for this comes from noticing that $S^2$ is also $\C P^1$ and the page of a planar open book naturally embeds in $\C P^1$. 

\begin{definition} We say that an open book for a $(2n+1)$--manifold is {\em projective} if its Weinstein page embeds as a convex domain in $\C P^n\#_k \overline{\C P^n}$ for some $k$. A contact manifold supported by a projective open book is called projective.
\end{definition} 

A more natural term for such open books would be ``rational", but as this term is already used for something else when discussing open books \cite{BakerEtnyreVanHorn-Morris12} we have opted for ``projective". 

Our first observation is that in dimension 5, iterated planar open books are also projective, and hence  iterated planar contact $5$--manifolds are projective.
\begin{theorem}\label{ipisp}
Any iterated planar contact $5$--manifold is projective.
\end{theorem}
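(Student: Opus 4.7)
The plan is to reduce the statement to the $3$--dimensional case, specifically to item~\eqref{3} in the list of planar properties above, by exploiting the fact that in dimension~$5$ the Weinstein page of an iterated planar open book is itself a Weinstein filling of a planar contact $3$--manifold.

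I would begin by unpacking the definitions. Let $(M,\xi)$ be an iterated planar contact $5$--manifold and let $(W,\lambda)$ be the Weinstein page of a supporting iterated planar open book. Then $W$ is a $4$--dimensional Weinstein domain equipped with a Lefschetz fibration $\pi: W \to D^2$ whose generic fiber is a planar surface $\Sigma_{0,b}$. Such a Lefschetz fibration induces on $\partial W$ an open book whose page is $\Sigma_{0,b}$ and whose monodromy is the product of Dehn twists along the vanishing cycles of $\pi$. Because $\Sigma_{0,b}$ has genus zero, this presents $\partial W$ as a planar contact $3$--manifold that is Weinstein filled by $(W,\lambda)$.

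The central step is then to invoke item~\eqref{3} above: any symplectic filling of a planar contact $3$--manifold embeds as a convex domain in a rational surface $Y$. Applied to $W$, this produces a convex embedding $W \hookrightarrow Y$. Up to diffeomorphism every rational surface is either $\mathbb{C}P^2$, $S^2 \times S^2$, or a blow-up of one of these; using the standard identification $S^2 \times S^2 \# \overline{\mathbb{C}P^2} \cong \mathbb{C}P^2 \#_2 \overline{\mathbb{C}P^2}$, each such $Y$ is diffeomorphic to $\mathbb{C}P^2 \#_k \overline{\mathbb{C}P^2}$ for some $k \geq 0$, except for the single exceptional case $Y = S^2 \times S^2$. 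That case can be resolved by performing one auxiliary symplectic blow-up at a point of $Y$ disjoint from $W$, which leaves the convex embedding of $W$ intact and turns the ambient manifold into $\mathbb{C}P^2 \#_2 \overline{\mathbb{C}P^2}$.

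After this adjustment $W$ embeds as a convex domain in $\mathbb{C}P^2 \#_k \overline{\mathbb{C}P^2}$ for some $k \geq 0$, which is by definition the statement that the open book supporting $(M,\xi)$ is projective. The main technical ingredient is item~\eqref{3}, which is already established in dimension three, so no genuine obstacle remains; the only thing to verify carefully is the short bookkeeping above ensuring that the rational surface produced there can be placed in the normal form $\mathbb{C}P^2 \#_k \overline{\mathbb{C}P^2}$ without disturbing the convex embedding of $W$.
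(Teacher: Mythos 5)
Your proof is correct and follows essentially the same route as the paper: reduce to dimension three by observing that the Weinstein page $(W,\lambda)$ of the iterated planar open book is a filling of a planar contact $3$--manifold, invoke the embedding result for such fillings, and then massage the ambient rational surface into the form $\C P^2 \#_k \overline{\C P^2}$. The paper cites the specific statement from \cite{Etnyre04a} (the filling embeds in a blowup of $S^2\times S^2$) and then blows up once more if necessary so that the ambient becomes symplectomorphic to a blowup of $\C P^2$; you instead cite item~\eqref{3} in the more generic form ``embeds in a rational surface'' and then dispose of the $S^2\times S^2$ case by an extra blowup away from $W$. These are the same argument. The one place where your wording is slightly looser than it should be is the phrase ``up to diffeomorphism every rational surface is either $\C P^2$, $S^2\times S^2$, or a blow-up of one of these'': what is actually needed is a \emph{symplectomorphism}, not just a diffeomorphism, taking the cap to $\C P^2 \#_k \overline{\C P^2}$ with some blown-up Fubini--Study form, and this relies on the uniqueness (up to diffeomorphism and deformation) of symplectic structures on rational surfaces (McDuff, Lalonde--McDuff). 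The paper makes the same move implicitly with the phrase ``is symplectomorphic to $\C P^2$ blowup some number of times,'' so this is not a gap relative to the paper, but it is worth flagging that the step is not purely topological bookkeeping.
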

Combining Theorem~\ref{ipisp} with the results in Section~\ref{sec: itpobs}, we conclude that there are many projective contact $5$--manifolds, which leads us to ask:
\begin{question}
Are iterated planar contact manifolds of dimension greater than five also projective?
\end{question}

Our main result about projective open books is that Eliashberg's ``capping procedure" \cite{Eliashberg04} can partially be carried out in this context (see Section~\ref{capingproceedure}) and in some cases fully carried out.
\begin{theorem}\label{projcap}
If $(M^5,\xi)$ is supported by an open book whose page embeds as a convex domain in $\C P^2\#_n \overline{\C P}^2$ for $n\leq 4$, then $(M,\xi)$ has a symplectic cap that contains an embedded $\C P^2\#_n \overline{\C P}^2$.
\end{theorem}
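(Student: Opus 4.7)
The plan is to follow the strategy of Eliashberg's capping procedure for planar contact $3$--manifolds, now adapted to the $5$--dimensional setting via the projective open book structure developed in Section~\ref{capingproceedure}. The open book $(P,\phi)$ supporting $(M^5,\xi)$ has Weinstein page embedded convexly in $W:=\C P^2\#_n\overline{\C P}^2$, and after Weinstein stabilization if necessary, the monodromy may be taken to factor as a product $\phi=\tau_{L_1}\circ\cdots\circ\tau_{L_k}$ of positive generalized Dehn twists along Lagrangian spheres $L_i\subset P$. The associated allowable Lefschetz fibration $X^6\to D^2$ with fiber $P$ and vanishing cycles $L_1,\dots,L_k$ is then a Weinstein filling of $(M,\xi)$, and forms the starting point of the construction.

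Using the convex embedding $P\hookrightarrow W$, I would then extend this to a Lefschetz fibration $Y^6\to D^2$ with closed fiber $W$ and the same vanishing cycles $L_i$, now regarded as Lagrangian spheres in $W$. Fiberwise, $X$ sits inside $Y$ as a codimension zero sub-Lefschetz fibration, so that $M=\partial X$ embeds in $Y$ as a contact-type hypersurface. Over $\partial D^2$ the fibration restricts to $\partial Y=W_\Phi$, the mapping torus of the extended monodromy $\Phi:=\tau_{L_1}\circ\cdots\circ\tau_{L_k}\in\mathrm{Symp}(W)$; since $\Phi$ is the identity on $W\setminus P$, this decomposes as $\partial Y=M_\phi\cup_{\partial P\times S^1}(W\setminus P)\times S^1$.

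The core of the argument, and the step I expect to be the main obstacle, is to close up $Y\to D^2$ to a genuine Lefschetz fibration $\widetilde Y\to S^2$ over the whole sphere. For this one must add further Lefschetz critical points over the southern hemisphere with vanishing cycles $N_1,\dots,N_m\subset W$ so that the total monodromy around the equator becomes trivial; equivalently, one must exhibit a factorization of $\Phi^{-1}$ as a product of positive Dehn twists along Lagrangian spheres in $W$. The hypothesis $n\leq 4$ is used precisely here: for $n\leq 4$ the del Pezzo surface $\C P^2\#_n\overline{\C P}^2$ carries an especially rich supply of Lagrangian spheres (arising, for instance, as matching cycles in Lefschetz pencils realizing $W$ as an iterated blow-up of $\C P^2$), which I expect to suffice for writing down such a factorization by hand. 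For larger $n$ no analogous explicit procedure appears to be available, which is why the theorem is stated under this hypothesis.

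Once $\widetilde Y\to S^2$ is obtained, $\widetilde Y$ is a closed symplectic $6$--manifold into which $X$ embeds symplectically, so the complement $C:=\widetilde Y\setminus\mathrm{int}(X)$ is a symplectic cap for $(M,\xi)$. A smooth regular fiber of $\widetilde Y\to S^2$ over any point in the southern hemisphere is disjoint from $X$, lies entirely inside $C$, and provides the required symplectically embedded copy of $\C P^2\#_n\overline{\C P}^2$, completing the proof.
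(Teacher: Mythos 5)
Your proposal deviates from the paper's argument in two places that create genuine gaps.

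First, you assume that after stabilization the monodromy of the supporting open book can be factored as a product of positive generalized Dehn twists $\tau_{L_1}\circ\cdots\circ\tau_{L_k}$ along Lagrangian spheres in the page. This is an extra hypothesis, not part of the theorem: such a factorization is equivalent (as in dimension $3$, and via the existence of the resulting Lefschetz fibration) to $(M,\xi)$ being Weinstein fillable, and the theorem makes no fillability assumption. The paper avoids this entirely by invoking Theorem~\ref{thm: cob1} (D\"orner--Geiges--Zehmisch): starting from \emph{any} supporting open book, one attaches $\big(\overline{F\setminus P}\big)\times D^2$ along the binding to build a cobordism from $(M,\xi)$ to a symplectic $F$--bundle over $S^1$, where $F=\C P^2\#_n\overline{\C P}^2$. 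No factorization of the original monodromy is needed.

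Second, the key step — trivializing the resulting mapping torus — is where the hypothesis $n\leq4$ does its work, and you have the wrong mechanism. You gesture at ``a rich supply of Lagrangian spheres'' and matching cycles in Lefschetz pencils to factor $\Phi^{-1}$ as a product of positive Dehn twists, but you never explain why such a factorization exists or why it would fail for $n\geq 5$. The paper's argument is sharper and actually uses $n\leq 4$: by Li--Wu \cite{LiWu12}, any symplectomorphism of $F$ acts on homology as a product of Dehn twists about Lagrangian spheres; and by Li--Li--Wu \cite{LiLiWu15}, for $\C P^2\#_n\overline{\C P}^2$ with $n\leq4$ a symplectomorphism that acts trivially on homology is symplectically \emph{isotopic} to the identity. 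Combining these, $\Phi^{-1}$ is symplectically isotopic (not equal) to a product of Dehn twists. One then builds a Lefschetz fibration $W'$ over $D^2$ with fiber $F$ whose boundary monodromy is that product, and glues $W'$ to $W$ along the mapping tori using Lemma~\ref{lem: symp} (the isotopy cobordism) and Lemma~\ref{nbhdofSHS} (standard neighborhoods of symplectic $S^1$--fibrations). The ``total monodromy trivial up to isotopy'' step is not a folklore existence of Lagrangian spheres — it is a precise consequence of \cite{LiLiWu15}, and that theorem is exactly what breaks for $n>4$. You should rework the proof to drop the Weinstein-fillability assumption and to cite the Li--Wu and Li--Li--Wu results where they are actually needed.
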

\begin{remark}
It is conjectured that any symplectomorphism of $\C P^2\#_n \overline{\C P}^2$ is symplectically isotopic to a composition of Dehn twists about Lagrangian spheres \cite{LiWuPre}. If this were known to be true then the above theorem would be true without the restriction on $n$. 
\end{remark}

\begin{example}
It is well known that the anti-diagonal is a Lagrangian sphere  in $S^2\times S^2$ with the symplectic form $\omega\oplus \omega$ and thus there is one in $\C P^2\#_2\overline{\C P}^2\cong (S^2\times S^2)\#\overline{\C P}^2$ with its blown up symplectic form. As Lagrangians have standard neighborhoods we see that the unit cotangent bundle $DT^*S^2$ of $S^2$ embeds in  $\C P^2\#_2\overline{\C P}^2$. Thus any contact structure on a $5$--manifold supported by an open book with page $DT^*S^2$ is a projective open book satisfying the hypothesis of this theorem.  We recall in Example~\ref{ex:ust} below that $S^5$ has infinitely many distinct contact structures that are supported by an open book with page $DT^*S^2$. So all of these contact structures have a cap containing $\C P^2\#_2\overline{\C P}^2$.
\end{example}

\begin{example}
Let $W_d^4$ be the Stein domain which is the complement of a neighborhood of a symplectic hypersurface $\Sigma_d$ of degree $d > 1$ 
in $\mathbb{CP}^2$ and  let $\tau_d$ denote the fibered Dehn twist on $W^4_d$ along its boundary $\del W^4_d$. The lens space $L_d$ given by $S^5/\mathbb{Z}_d$,  admits an open book with page $W_d$ and monodromy $\tau_d$, see  \cite[page 423]{ChiangDingvanKoert14}. Moreover the supported contact structure $\xi_d$ is simply the one induced from $S^5$ by taking the quotient by the group action. So from Theorem~\ref{projcap} we see that $(L_d,\xi_d)$ has a symplectic cap that contains a copy of $\C P^2$. We would like to point out $(L_2,\xi_2)$  will appear again in the proof of  Proposition~\ref{fillablebutnoteStein}. 
\end{example}

\subsection{Building symplectic cobordisms using open books}
Here we discuss two methods to build symplectic cobordisms based on information about open book decompositions. 

\begin{theorem} \label{thm: cob} 
Let $(M,\xi)$ be a $(2n+1)$--dimensional closed contact manifold supported by an open book decomposition with page $F$, binding $B=\partial F$, and monodromy $\phi$. Suppose that $X$ is an exact symplectic cobordism such that $\partial X=-B \cup B'$. Then there exists a strong symplectic cobordism $$W=M \times [0, 1] \bigcup_{B \times D^2 \times \{1\}=\partial_{-}X \times D^2} X\times D^2$$ (after rounding corners)  such that $\partial W=\partial_{-}W \cup \partial_{+}W$ with $\partial_{-}W=-M$ and $\partial_{+}W=M'$ where $(M', \xi')$ is supported by an open book with page $F \cup X$, binding $B'$, and monodromy $\phi'$ which is identity on $X$ and agrees with $\phi$ on $F$.
\end{theorem}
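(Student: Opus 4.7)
The plan is to produce the symplectic form on $W$ by matching a symplectization on $M \times [0,1]$ with a product symplectic form on $X \times D^2$, and then to identify the new positive boundary as the asserted open book.

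First, I would fix a Liouville primitive $\lambda_X$ for the exact cobordism $X$ and set $\alpha_B := \lambda_X|_B$, a contact form on $B$. Using Giroux's construction, I choose a supporting contact form $\alpha$ on $M$ adapted to $(F, \phi)$ that takes the normal form $\alpha = \alpha_B + r^2\, d\theta$ on the binding neighborhood $B \times D^2$, where $(r, \theta)$ are polar coordinates on $D^2$. On $M \times [0,1]$ I take the symplectization $d(e^t \alpha)$, a trivial exact cobordism from $(M, \xi)$ to itself with outward Liouville vector field $\partial_t$ at $M \times \{1\}$. On $X \times D^2$ I take the exact symplectic form $\omega_X = d\bigl(\lambda_X + \tfrac{1}{2} r^2\, d\theta\bigr)$, whose Liouville vector field is the sum of the Liouville vector field on $X$ (pointing inward at $\partial_- X = B$ and outward at $\partial_+ X = B'$) and the radial field $\tfrac{r}{2}\partial_r$ on $D^2$.

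To glue these along $B \times D^2 \times \{1\} = \partial_- X \times D^2$, observe that in a collar of the gluing locus both Liouville primitives have the form $f\, \alpha_B + g\, d\theta$, with $f$ and $g$ depending only on the collar coordinates and $r$. I would interpolate between the two primitives using cutoff functions chosen so that the resulting primitive equals $e^t(\alpha_B + r^2\, d\theta)$ away from the binding, equals $\lambda_X + \tfrac{1}{2} r^2\, d\theta$ near the gluing, and has nondegenerate exterior derivative throughout the interpolation. Nondegeneracy reduces to positivity of a $2 \times 2$ minor in the partial derivatives of $f$ and $g$, which is arranged by a standard convex-combination argument. The result is a global Liouville form on $W$ giving an exact symplectic structure $\Omega$ with $\partial_- W = -M$.

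The positive boundary decomposes as
$$\partial_+ W = \bigl((M \setminus (B \times D^2)) \times \{1\}\bigr) \;\cup\; (X \times \partial D^2) \;\cup\; (\partial_+ X \times D^2).$$
The first piece is the mapping torus $F_\phi$, the second is $X \times S^1$, which is the mapping torus of $\mathrm{id}_X$, and the third is $B' \times D^2$, a tubular neighborhood of the new binding. Gluing along $B \times S^1$ and $B' \times S^1$, these realize $M'$ as the open book with page $F \cup_B X$, binding $B'$, and monodromy equal to $\phi$ on $F$ and the identity on $X$. The Liouville vector field of $\Omega$ points outward along each of the three pieces by the computation above, so $\partial_+ W$ is a convex boundary; after smoothing the corner at $B' \times S^1$, the induced contact form restricts to a positive contact form transverse to the $\theta$-fibers of the mapping torus and has the correct vanishing behavior at $B'$, so by Giroux's criterion the induced contact structure $\xi'$ is supported by the stated open book. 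The main technical obstacle is the interpolation step: producing a smooth Liouville form on the collar that remains symplectic and yields the outward Liouville field on $\partial_+ W$. This is a local computation, and the close match between the two models near the binding makes the required cutoff construction explicit.
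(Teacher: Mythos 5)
Your proposal is correct and takes essentially the same route as the paper: both constructions attach the product Liouville domain $X\times D^2$ to the piece of symplectization $M\times[0,1]$ along $B\times D^2\times\{1\}=\partial_-X\times D^2$, identify $\partial_+W$ via the same three-piece decomposition $\bigl((M\setminus(B\times D^2))\times\{1\}\bigr)\cup(X\times\partial D^2)\cup(\partial_+X\times D^2)$, and read off the new open book $(F\cup X,\,\phi')$ with binding $B'$. The only difference is cosmetic: you interpolate Liouville primitives in a collar, while the paper flows $\partial_-X\times D'$ by the Liouville field $Z=V+\tfrac12(x\partial_x+y\partial_y)$ to match it with the symplectization of $B\times D^2$ and then rounds corners; these are two standard descriptions of the same gluing.
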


During the writing of this paper the first author and Agustin Moreno independently constructed a cobordism in \cite{Acu17preprint} using cobordisms of the binding as in Theorem~\ref{thm: cob}  and, though not stated in these terms, their arguments can recover Theorem~\ref{thm: cob} (and more). We also note that this theorem can be thought of as a higher dimensional analog of a cobordism constructed by Lisi, Van Horn-Morris, and Wendl in \cite{LisiVanHornMorrisWendl18} for ``spinal open books". In addition, if $X$ in the theorem is a Weinstein cobordism then this theorem also follows from the work of van Koert in Section~4.1 and~4.3 of \cite{vanKoert17}

We hope that Theorem~\ref{thm: cob} might be useful in studying symplectic cobordisms between contact manifolds. The next theorem does not give a symplectic cobordism between contact manifolds but it rather gives a symplectic cobordism from a contact manifold to a symplectic fibration. This will be used in conjunction with the information about the symplectomorphism group of some symplectic manifolds to prove Theorem~\ref{projcap}.

\begin{theorem}[D\"orner, Geiges and Zehmisch \cite{DornerGeigesZehmisch14}] \label{thm: cob1} 
Suppose that $(M,\xi)$ is a closed contact manifold supported by an open book decomposition with page $F$, binding $B=\partial F$, and monodromy $\phi$, and let $X$ be a symplectic cap for $B$. Then there exists a symplectic cobordism $$W=M \times [0, 1] \bigcup_{B\times D^2 \times \{1\}=\partial_{-}X \times D^2} X\times D^2$$  (after rounding corners) such that $\partial W=\partial_{-}W \cup \partial_{+}W$ with $\partial_{-}W=-M$ and $\partial_{+}W=M'$ where $M'$ fibers over the circle with symplectic fibers. The fibers of $M'$ are $F \cup X$ and the monodromy of the bundle is the identity on $X$ and agrees with $\phi$ on $F$.
\end{theorem}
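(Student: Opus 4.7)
The plan is to construct $W$ by symplectically attaching $X \times D^2$ onto the top of the symplectization of $(M, \alpha)$, where $\alpha$ is a contact form adapted to the open book. I would first choose $\alpha$ in its standard model near the binding: on a tubular neighborhood $B \times D^2 \subset M$ with polar coordinates $(r, \theta)$ on $D^2$, write $\alpha = \alpha_B + r^2\, d\theta$, where $\alpha_B$ is the restricted contact form on $B$. On $M \times [0,1]$ equip the symplectization form $\omega_0 = d(e^t \alpha)$, so that the concave end $M \times \{0\}$ realizes $-M$ as a contact manifold with its given contact structure $\xi$.

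Next I would put a symplectic form on $X \times D^2$ that matches $\omega_0$ on the attaching region. Since $\partial_- X = B$ is concave, $X$ admits a collar $[0, \epsilon)_s \times B \subset X$ on which $\omega_X = d(e^s \alpha_B)$. On this collar crossed with $D^2$, use the model $d(e^s(\alpha_B + r^2\, d\theta))$; via the identification $s = t$ this agrees with $\omega_0$ along $B \times D^2 \times \{1\} = \partial_- X \times D^2$, so the two pieces glue smoothly. Away from the collar, use a product symplectic form $\omega_X + \omega_{D^2}$ (with $\omega_{D^2}$ a suitably scaled area form on $D^2$), and interpolate between the two forms in a transition collar via a cutoff-function and Liouville-primitive argument, verifying that non-degeneracy is preserved throughout.

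The resulting cobordism $W$ has $\partial_- W = -M$ by construction. The attaching topologically replaces $B \times \mathrm{Int}(D^2) \times \{1\} \subset M \times \{1\}$ by $X \times S^1$, glued along $B \times S^1$. Since $M$ minus a neighborhood of $B$ is the mapping torus of $\phi : F \to F$ relative to $\partial F = B$, the resulting hypersurface $M' = \partial_+ W$ is a fiber bundle over $S^1$ with fiber $F \cup_B X$ and monodromy $\phi \cup \mathrm{id}_X$. Restricting the symplectic form on $W$ to a fiber gives $d\alpha|_F$ on the page $F$ and (up to rescaling) $\omega_X$ on the cap $X$; these agree on the common piece $B$ as $d\alpha_B$, so each fiber is a closed symplectic manifold.

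The main obstacle is the interpolation step: constructing a globally non-degenerate $2$-form on $X \times D^2$ that realizes the symplectization model near $\partial_- X \times D^2$, matches $\omega_0$ exactly across the gluing locus, and yields a fiberwise-symplectic form when restricted to $\partial_+ W$. The scaling factor $e^t$ present in $\omega_0$ must be reconciled with the product structure on $X \times D^2$, and one must verify non-degeneracy throughout the transition region. This is handled by a standard Liouville-interpolation argument, available because the symplectization form and the product form share an exact primitive on the collar where they are being compared, so a convex combination of primitives produces the desired symplectic form on the attached piece.
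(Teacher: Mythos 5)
The paper does not give a proof of this statement; it attributes Theorem~\ref{thm: cob1} entirely to D\"orner, Geiges and Zehmisch \cite{DornerGeigesZehmisch14} and simply cites it. The closest in-house argument is the proof of the companion Theorem~\ref{thm: cob}, which constructs the cobordism by pushing $X\times D^2$ forward along the global Liouville vector field $Z=V+\tfrac12(x\partial_x+y\partial_y)$ of the exact form $\lambda'=\lambda+x\,dy-y\,dx$ and then identifying the resulting skirt with a piece of the symplectization of $M$. That argument uses exactness of $\omega_X$ in an essential way and therefore does \emph{not} transfer to the present setting, where $X$ is a symplectic cap and need not carry a global Liouville primitive. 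Your collar--interpolation route is the correct replacement: you only use exactness of $\omega_X=d(e^s\alpha_B)$ on the concave collar, where it holds automatically, and elsewhere you take the genuine product form $\omega_X+\omega_{D^2}$. This is in the spirit of what \cite{DornerGeigesZehmisch14} do (and of Lemma~\ref{nbhdofSHS}, which is their Lemma~10), so I would call your argument the right one and genuinely different from the Liouville-flow proof of Theorem~\ref{thm: cob}.

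Two points in your write-up deserve a slightly harder look, though neither is a real gap. First, the sentence ``a convex combination of primitives produces the desired symplectic form'' is not true in general; it works here only because the two primitives $e^s(\alpha_B+r^2\,d\theta)$ and $e^s\alpha_B+f(r)\,d\theta$ differ only in their $d\theta$-component, so the interpolated form has the shape $e^s\,ds\wedge\alpha_B+e^s\,d\alpha_B+(\,\ast\,)\,ds\wedge d\theta+g_r\,dr\wedge d\theta$, and when you raise this to the $(n+1)$-st power the $ds\wedge d\theta$ cross-term is annihilated (it cannot pair with $ds\wedge\alpha_B$ or with $dr\wedge d\theta$), leaving the volume form proportional to $g_r\,ds\wedge\alpha_B\wedge(d\alpha_B)^{n-1}\wedge dr\wedge d\theta$. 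Non-degeneracy then comes down to $g_r>0$, which is clear for your cutoff since both $2e^s r$ and $f'(r)$ are positive. You should make this explicit, since it is exactly the point where a generic interpolation of primitives would fail. Second, there is a trivial bookkeeping mismatch: at the gluing locus $t=1$ the symplectization form is $d\bigl(e\cdot(\alpha_B+r^2\,d\theta)\bigr)$, while your collar starts at $s=0$ with $d(\alpha_B+r^2\,d\theta)$; you need to shift $s\mapsto s+1$ (or rescale $\omega_X$ by $e$) so the forms literally agree on the identification, which is why you correctly hedge with ``up to rescaling.'' Finally, you should at least remark that the gluing produces a codimension-two corner along $B\times\partial D^2$ that has to be rounded, but this is a standard smoothing and does not affect the symplectic structure or the fibration structure of $\partial_+W$.

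Subject to those clarifications your argument is correct and gives essentially the construction of \cite{DornerGeigesZehmisch14}.
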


\subsection*{Acknowledgements} We thank anonymous referees who provided valuable comments on earlier versions of this paper and caught an error in the original proof of Lemma~5.2. The second author was partially supported by the Elaine M.~Hubbard Distinguished Faculty Award and NSF grants DMS-1906414 and DMS-2203312.

\section{Iterated Planar Contact Manifolds}\label{ipcm}
In this section, we introduce iterated planar Lefschetz fibrations, iterated planar open books, and iterated planar contact manifolds, after recalling the necessary background on open books and Lefschetz fibrations.

\subsection{Open book decompositions and Lefschetz fibrations}
We begin by recalling two notions of open book decompositions. 

\begin{definition} \label{definition:aobd}
An \textit{abstract open book decomposition} is a pair $(F, \phi)$, where $F$ is a compact manifold with boundary, called the \textit{page} and $\phi: F \rightarrow F$ is a diffeomorphism which restricts to the identity on  $\partial F$, called the \emph{monodromy}.
\end{definition}

\begin{definition} 
An \emph{open book decomposition} of a closed oriented manifold $M$ is a pair $(B,\pi)$, where $B$ is a codimension $2$ submanifold of $M$ with trivial normal bundle, called the \emph{binding} of the open book and $\pi: M \setminus B \rightarrow S^{1}$ is a fiber bundle such that $\pi$ agrees with the angular coordinate $\theta$ on the normal disk $D^2$ when restricted to a neighborhood $B \times D^2$ of $B$. 
\end{definition}

For any $\theta \in S^1$, the closure $F_{\theta}:= \overline{\pi^{-1}(\theta)}$ is a manifold with boundary $\partial F_{\theta}= B$ called the \emph{page} of the open book. The holonomy of the fiber bundle $\pi$ determines a conjugacy class in the orientation-preserving diffeomorphism group of a page $F_{\theta}$ fixing its boundary, i.e. in $\mbox{Diff}^{+}(F_{\theta}, \partial F_{\theta})$ which we call the {\em{monodromy}}. Note that the pages are naturally co-oriented by the canonical orientation of $S^1$ and hence, since $M$ is oriented,  are naturally oriented. Also, the binding inherits an orientation from the open book decomposition of $M$. We assume that the given orientation on $B$ coincides with the boundary orientation induced by the pages. It is well-known that an abstract open book decomposition gives a closed  manifold with an open book decomposition and vice-versa. 

\begin{definition}
 A contact structure $\xi$ on a compact and oriented manifold $M$ is said to be \emph{supported by an open book} $(B,\pi)$ of $M$ if it is the kernel of a contact form $\lambda$ satisfying the following:
\begin{enumerate}
\item $\lambda$ is a positive contact form on the binding and
\item $d\lambda$ is positively symplectic on each fiber of $\pi$.
\end{enumerate}

If these two conditions hold, then the open book $(B,\pi)$ is called a \emph{supporting open book} for the contact manifold $(M, \xi)$ and the contact structure $\xi$, as well as the contact form $\lambda$, is said to be \emph{adapted} to the open book $(B,\pi)$. 
\end{definition}

\begin{definition} \label{def: Liouv}
A \textit{Liouville domain} is a compact manifold $W$ with boundary, together with a $1$--form $\lambda$ such that  $\omega = d \lambda$ is symplectic and the Liouville vector field $Z_\lambda$ on $W$ defined by $\omega (Z_\lambda, \cdot) = \lambda$ points transversely outward along $\partial W$. Moreover, if $\varphi : W \to \R$ is a (generalized) Morse function for which $Z_\lambda$ is gradient-like and $\partial W$ is a regular level set of $\varphi$, then the quadruple $(W, \omega, Z_\lambda, \varphi)$  is called a \textit{Weinstein domain}. 
\end{definition}

Note that the primitive $\lambda$ in Definition~\ref{def: Liouv} is called  a \textit{Liouville  form} and $\ker \lambda$ is a contact structure on $\partial W$, by definition. We also say that $\partial  W$ is {\em convex. }

Let $(W, \beta)$ be a Liouville domain and let $\phi : W \to W$ be an exact symplectomorphism which is the identity near $\partial W$. Exactness of $\phi$ means that $\phi^* \beta - \beta =
d h$ for some positive function $h :  W \to \mathbb{R}$, which is
constant on each component of $\partial W$. 

\begin{remark} If  $\dim
W= 2$, then any \emph{diffeomorphism} of $W$ can be isotoped rel. boundary to a
symplectomorphism of $(W, d\beta)$ \cite[Lemma 7.3.2]{Geiges08}. When
$\dim W \geq  2$, exactness of a \emph{symplectomorphism} can be
achieved by an isotopy rel. boundary\cite[Lemma 7.3.4]{Geiges08}.
\end{remark} 

Note that $\ker (\beta|_{\partial W}) $ is a contact
structure on $\partial W$. Then $\alpha:=\beta + d\varphi$, where
$(r, \varphi)$ are polar coordinates on $\mathbb{D}^2$, descends to
a contact form on the (generalized) mapping torus $(W \times [0,1])/ \sim$, which can be extended
over $\partial W \times \mathbb{D}^2$ as $h_1(r)\beta+ h_2(r) d
\varphi$, for some suitable functions $h_1$ and $h_2$ (see
\cite[Section 7.3]{Geiges08}). Let $\lambda$ denote the resulting contact  form  on the manifold $M$ described by the abstract open book $(W, \phi)$. 

\begin{proposition} [Giroux 2002, \cite{Giroux02}]  
The contact structure $\xi=\ker \lambda$ described above on $M$ is adapted to the abstract open book whose page is the Liouville domain $(W, \beta)$ and whose monodromy is the exact symplectomorphism $\phi$. 
\end{proposition}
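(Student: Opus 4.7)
The plan is to verify the two defining conditions of being adapted by building $\alpha$ explicitly on the two natural pieces of $M$ and checking they glue smoothly to yield a contact form on the whole manifold; this is the higher-dimensional Giroux version of the classical Thurston--Winkelnkemper construction. The manifold $M$ decomposes as the mapping torus $W_\phi = (W \times [0,1])/(x,1)\sim(\phi(x),0)$ together with the binding neighborhood $\partial W \times \mathbb{D}^2$, glued along their common boundary $\partial W \times S^1$. On each piece I would write down a concrete $1$-form, verify that it is contact, and check that the two match on the overlap; once this is done, the adaptation axioms follow almost directly from the formulas.

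On the mapping torus, the subtle point is that the naive form $\beta + d\varphi$ does not literally descend to $W_\phi$, because the identification involves $\phi$ and $\phi^{*}\beta - \beta = dh$. To repair this I would interpolate: letting $t$ denote the $[0,1]$-coordinate, set $\widetilde{\beta} = \beta + t\, dh$. Then $\widetilde{\beta}|_{t=1} = \beta + dh = \phi^{*}\beta$, which agrees with the pullback of $\widetilde{\beta}|_{t=0}=\beta$ under $(x,1)\mapsto(\phi(x),0)$, so $\widetilde{\beta}$ descends to a $1$-form on $W_\phi$. Setting $\alpha = \widetilde{\beta} + K\, dt$ for a positive constant $K$, a direct calculation gives
\[
\alpha \wedge (d\alpha)^n = K\, dt \wedge (d\beta)^n + (\text{terms independent of } K),
\]
and since $(d\beta)^n$ is a volume form on $W$, taking $K$ sufficiently large makes this nowhere vanishing, so $\alpha$ is a contact form on the mapping torus.

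Near the binding, $\phi$ is the identity and $h$ is locally constant, so the mapping torus form reduces to $\beta + K\, d\varphi$ in a collar of $\partial W \times S^1$. On $\partial W \times \mathbb{D}^2$ with polar coordinates $(r,\varphi)$ I would use the standard model $\alpha = h_1(r)\, \beta + h_2(r)\, d\varphi$, with $\beta$ pulled back from $\partial W$, choosing smooth $h_1,h_2:[0,1]\to\R$ so that $h_1(0)>0$, $h_2(r)\sim r^2$ near $r=0$ (for smoothness at the disk center), the boundary values $h_1(1)=1$ and $h_2(1)=K$ match the mapping torus formula, $h_1$ is strictly decreasing, and the contact determinant condition $h_1(r)\, h_2'(r) - h_1'(r)\, h_2(r) > 0$ holds. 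Standard monotone choices realize all of these simultaneously.

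Once $\alpha$ is in hand the adaptation axioms follow: on the binding $\partial W \times \{0\}$ the form restricts to $h_1(0)\, \beta|_{\partial W}$, a positive multiple of the Liouville contact form on $\partial W$; on a page $F_\varphi$, $d\alpha$ restricts to $d\beta$ on the mapping torus portion (since $dt$ vanishes on the page) and to $h_1'(r)\, dr \wedge \beta|_{\partial W} + h_1(r)\, d\beta|_{\partial W}$ on the binding portion, both symplectic by our sign choices. The main obstacle is the simultaneous juggling of $h_1, h_2, K$ and the $t$-interpolation so that $\alpha$ is smooth, contact, and positively symplectic on pages everywhere; every other step reduces to a straightforward computation.
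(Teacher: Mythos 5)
Your proof is correct and follows the same Thurston--Winkelnkemper/Giroux construction that the paper sketches before the proposition and defers to \cite[p.~348]{Geiges08} for: build an interpolated form on the mapping torus, extend it over the binding neighborhood by $h_1(r)\beta + h_2(r)\,d\varphi$ with suitable boundary matching, and read off the two adaptation conditions from the formulas. You also correctly flag that $\beta + d\varphi$ does not literally descend to $(W\times[0,1])/\!\sim$ when $\phi^*\beta - \beta = dh \neq 0$ and supply the standard repair $\widetilde\beta = \beta + t\,dh$ together with a large constant $K$ in front of $dt$ --- a point the paper's one-line sketch glosses over; the only small caution is that the precise monotonicity you impose on $h_1$ (``strictly decreasing'') must be reconciled with the collar form $e^s\beta_0$ of the Liouville structure at $r=1$, where one actually needs $h_1$ to match an increasing function, but this is a routine convention-dependent adjustment that does not affect the argument.
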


Conversely,

\begin{theorem}[Giroux 2002, \cite{Giroux02}] \label{gir} 
Every closed contact manifold admits an adapted open book with Weinstein pages.
\end{theorem}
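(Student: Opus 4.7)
The plan is to reduce the theorem to producing a suitable pair of auxiliary structures on a closed contact manifold $(M^{2n+1},\xi)$ and then to verify that one of them yields the desired Weinstein pages. It seems cleanest to separate the low-dimensional and high-dimensional cases, since the proofs use quite different machinery.

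In dimension three, I would proceed via a \emph{contact cell decomposition}. Starting from any smooth CW structure on $M$, one uses the contact neighbourhood theorem together with Darboux-type arguments to isotope the $1$-skeleton to be Legendrian and to arrange the $2$-cells so that their boundaries are Legendrian unknots with Thurston--Bennequin invariant $-1$ relative to the disk they bound. A regular neighbourhood of the $1$-skeleton then carries a canonical ribbon surface $F$, and the closure of its complement is a union of $2$-handles whose cores are the $2$-cells; a standard model fibration realises $F$ as the page of an open book with binding a transverse push-off of $\partial F$. Compatibility of $\xi$ with the resulting open book follows directly from the local Darboux model used at each step, and since the page is a surface with boundary it automatically admits a Weinstein structure.

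For $\dim M \geq 5$ the plan is to invoke the approximately holomorphic techniques developed by Donaldson and adapted to the contact setting by Ibort, Mart\'inez-Torres, Presas, and Giroux--Mohsen. Equip $\xi$ with a compatible almost complex structure $J$ and choose a Hermitian line bundle $L \to M$ whose curvature represents $[d\al|_\xi]$. For $k$ sufficiently large, one constructs a pair of approximately $J$-holomorphic sections $s_0, s_1$ of $L^{\otimes k}$ that are uniformly transverse to $0$ and whose ratio is uniformly transverse to $0$ on the complement of $B := \{s_0 = s_1 = 0\}$. The set $B$ becomes a codimension-two contact submanifold, and the map
\[
\pi = [s_0 : s_1] \colon M \setminus B \longrightarrow \C P^1
\]
restricted to $S^1 \subset \C P^1$ gives the open book projection. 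After a controlled deformation of $\al$, one can arrange that $\al$ is positive on the binding and $d\al$ is symplectic on each page, so $\xi$ is adapted to $(B,\pi)$.

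The main obstacle is the uniform transversality package for $s_0$, $s_1$, and for the ratio $s_1/s_0$: this is exactly the large-$k$ analytic content of approximately holomorphic geometry, which I would cite as a black box rather than reprove. To promote the pages from merely Liouville to honestly \emph{Weinstein}, one exploits the fact that the approximately holomorphic construction equips each page $F_\theta$ with a natural plurisubharmonic function arising (after suitable regularisation) from $\log|s_0|^2$, whose gradient is Liouville and points outward along the binding; a Cieliebak--Eliashberg Liouville-to-Weinstein deformation, applied in families over $\theta\in S^1$ and extended across the binding using the fibred Darboux model there, completes the construction.
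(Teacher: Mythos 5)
This theorem is cited from Giroux's ICM address and the paper gives no proof of it, so there is no in-paper argument for you to compare against; I will assess your proposal on its own. Your dimension-three argument (contact cell decomposition, Legendrian $1$-skeleton with $2$-cells bounded by $\mathrm{tb}=-1$ Legendrian unknots, ribbon of the skeleton as page with its transverse boundary as binding) is the correct and standard Giroux argument, and the remark that a Liouville surface-with-boundary automatically carries a Weinstein structure is unproblematic.

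In the higher-dimensional part there is a genuine gap. You set $B := \{s_0 = s_1 = 0\}$ for a pair of approximately holomorphic sections of $L^{\otimes k}$ and take $\pi = [s_0 : s_1]$. But the common zero locus of two uniformly transverse sections of a \emph{complex line} bundle has real codimension four, not two, so this $B$ cannot serve as the binding of an open book; and ``restricting $[s_0:s_1]$ to $S^1\subset \C P^1$'' is restricting to a subset of the codomain, which does not produce a circle-valued fibration. The pencil $[s_0 : s_1] : M \setminus B \to \C P^1$ is the contact analogue of a Donaldson Lefschetz \emph{pencil}, a different structure. The Giroux--Mohsen open book construction instead uses a \emph{single} approximately $J$-holomorphic section $s$ of $L^{\otimes k}$: one sets $B := s^{-1}(0)$, a codimension-two contact submanifold, and the fibration is $s/|s| : M \setminus B \to S^1$. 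The analytic black box needed is quantitative transversality of $s$ to $0$ together with a suitable higher-order transversality of its covariant derivative along $\xi$, which makes $B$ contact, makes $d\alpha$ symplectic on the pages, and gives the fibered Darboux model near $B$. Your concluding idea of upgrading Liouville to Weinstein via a Cieliebak--Eliashberg-type deformation done in families over $S^1$ and matched across the binding is a reasonable way to finish, but it must be applied to the one-section pages $\overline{(s/|s|)^{-1}(\theta)}$ rather than to pencil fibers, and the relevant exhaustion comes from $|s|$ (whose zero level is the binding), not from $\log|s_0|^2$.
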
 

\begin{definition} \label{def: lef} 
Let $E$ be a compact $2n$--dimensional manifold with corners whose boundary is the union of two
faces, namely the horizontal boundary $\partial_h E$ and the vertical boundary $\partial_v E$, meeting in a codimension 2 corner. Let $\omega=d \lambda$ be an exact symplectic form on $E$ such that both faces of the boundary are convex. Let $f: E \to D^2$ be a proper smooth map with finitely many critical points  $Crit(f) \subset E$ and let  $F$ denote a regular fiber. We say that  $f$ is an \textit{exact symplectic Lefschetz fibration} on $(E, \omega= d \lambda)$ if it satisfies the following properties: 

(1) \textit{Conditions on the boundary}:  We require that $\partial_v E= f^{-1}(\partial D^2)$ and $f|_{\partial_v E}: \partial_v 
E \to \partial D^2$ is a surjective smooth fiber bundle. Moreover, there is a neighborhood $N$ of $\partial_h E$ such that $f|_N : N \to  D^2$ is a product fibration $D^2 \times nhd (\partial F)$, where $nhd (\partial F)$ is a collar neighborhood of $\partial F$ in  a regular fiber $F$,  and  
$\omega|_N$ and $\lambda|_N$ are sums of pullbacks of the corresponding form from the first and second factor of this product. In particular, $f|_{\partial_h E}: \partial_h E \to D^2$ is a surjective smooth fiber bundle, where $\partial_h E= \bigcup_{z \in D^2} \partial (f^{-1}(z))$. 

(2) \textit{Conditions on the fibers}:  Let $E_z$ denote the fiber $f^{-1}(z)$ for any $z \in D^2$. We require that the restriction of $\omega$ to $E_z \setminus Crit(f)$ is symplectic. In particular,  the boundary of each fiber is convex and each regular fiber of $f$ is an exact symplectic manifold with convex boundary. We also require that there is at most one critical point at each fiber. 

(3)  \textit{Conditions on the critical points}: For any $p \in Crit(f)$,  there are orientation preserving local complex coordinates about $p$ on $E$ and $f(p)$ on $D^2$ such that, with
respect to these coordinates, $f$ is given by the  map $$f(z_1, . . . , z_n)= z_1^2 + \cdots + z_n^2.$$  
\end{definition}

It follows by  Definition~\ref{def: lef} that the critical points of $f$ are isolated and belong to the interior of $E$. We also allow $Crit(f)$ to be the empty set. More importantly, the corners of $E$ can be rounded off to obtain a  Liouville domain $(W, \lambda)$ so that one can speak about an exact symplectic Lefschetz fibration $f: (W, \lambda)  \to D^2$ such that each regular fiber is again a Liouville domain equipped with the restriction of $\lambda$. 

According to a recent result of Giroux and Pardon \cite{GirouxPardon17}, any Stein (respectively Weinstein) domain admits a Stein (respectively Weinstein) Lefschetz fibration over $D^2$, which by definition implies that the regular fiber of the fibration  is Stein (respectively Weinstein).  So, we can consider these type of fibrations as special cases of the exact symplectic fibrations we defined on Liouville domains---which might be called Liouville Lefschetz fibrations in the same vein. 

\begin{construction}\label{theconstruction}
We make a few observations about constructing open books and Lefschetz fibrations that will be used repeatedly below. The first simple observation is that if $f: (W, \lambda)  \to D^2$ is an exact symplectic Lefschetz fibration,  then $\partial W$ is convex and hence has an induced contact structure $\xi$. Moreover, if we take polar coordinates on $D^2$ and let $B=f^{-1}(0)\cap \partial W$, then $\partial W \setminus B$ is fibered by composing $f$ with projection to the $\theta$ coordinate. One may easily check that this open book supports $\xi$. In addition, \cite[Lemma 4.4]{vanKoert17} implies that  if $\Lambda$ is an exact Lagrangian sphere in a fiber of $f$ over a point in $\partial D^2$, then there is an associated Legendrian sphere in $\partial W$ in a neighborhood of that fiber, obtained by perturbing the Lagrangian one. By attaching a Weinstein handle to $W$ along $\Lambda$, we get a new symplectic manifold $W'$ with an exact symplectic Lefschetz fibration that has one additional vanishing cycle given by $\Lambda$, \cite[Section~6.2]{GirouxPardon17}. In other words,  given an exact symplectic $(2n-2)$--manifold $X$, a general exact symplectic Lefschetz fibration in dimension $2n$ can be built by successively attaching Weinstein $n$--handles to the trivial exact symplectic Lefschetz fibration $X \times D^2$ along Lagrangian spheres embedded in distinct fibers above $\partial D^2$. 
\end{construction}

\subsection{Iterated planar open books}\label{sec:IPOB}

Once an exact symplectic Lefschetz fibration on a Liouville domain is given,  one can further consider an exact symplectic Lefschetz fibration on the codimension two Liouville fiber, and iterate this dimension reduction until the fiber is $4$--dimensional.  This leads to the following definition. 

\begin{definition}
An \textit{iterated planar Lefschetz fibration} on a $2n$--dimensional Liouville domain $(W,  \lambda)$ is a sequence of exact symplectic Lefschetz fibrations $$f_i:(W_i,  \lambda_i) \to D^2$$ for $i=n,n-1, \ldots, 2$, with the following properties: 
\begin{enumerate}
\item $(W, \lambda) = (W_n, \lambda_n)$ and $f=f_n$
\item $(W_{i-1},  \lambda_{i-1})$ is  a regular fiber of $f_{i}$, for $i=n,n-1, \ldots, 3$ 
\item $f_2: (W_2,  \lambda_2) \to D^2$ is planar, i.e. the regular fiber of $f_2$ is a genus zero surface with nonempty boundary.
\end{enumerate}
Notice that when thinking of $(W_{i-1},  \lambda_{i-1})$ as a fiber of $f_i$ then we are rounding corners as discussed in Definition~\ref{def: lef} so that we never have corners of codimension greater than two. 
\end{definition}

Although we formulated the definition of an iterated planar Lefschetz fibration by reducing the dimension at each step, it is also possible to start with the lowest possible dimension and build up higher-dimensional fibrations as follows. Suppose that $W_2$ is a smooth $4$--manifold with nonempty boundary which admits a {\em smooth} planar Lefschetz fibration over $D^2$. Then by standard methods one can equip $W$ with an exact symplectic form $\omega_2=d\lambda_2$ such that the smooth planar Lefschetz fibration turns into an exact symplectic Lefschetz fibration $f_2:(W_2,  \lambda_2) \to D^2$ with planar fibers. Similarly, using Construction~\ref{theconstruction} one can construct a Liouville domain $(W_3,  \lambda_3)$ of dimension $6$ which admits an exact symplectic Lefschetz fibration $f_3 : (W_3,  \lambda_3) \to  D^2$, whose fiber is  $(W_2, \lambda_2)$. This process, of course, can be iterated as many times as desired.  Note that an iterated planar Lefschetz fibration on a $4$--dimensional Liouville domain is nothing but a planar Lefschetz fibration.

\begin{remark} 
It is not true that every $4$--manifold with nonempty boundary admits a planar Lefschetz fibration over $D^2$. For example, suppose that  $T^2 \times D^2$ admits such a fibration. Then there must be a planar Stein fillable contact structure on $\partial (T^2 \times D^2) = T^3 , $ but the unique Stein fillable contact structure on $T^3$ \cite{Eliashberg96} is known to be not planar \cite{Etnyre04b} (see
Item (2) in the Introduction).
\end{remark}

\begin{definition}
An open book decomposition whose (Weinstein) page admits an iterated planar Lefschetz fibration is called an \textit{iterated planar open book}. An \textit{iterated planar contact manifold}  is a closed contact manifold of dimension at least 5, which is supported by an iterated planar open book decomposition.
\end{definition}

\section{General results about iterated planar contact manifolds}\label{IPgeneral}

A few examples of iterated planar contact manifolds  were implicit in the first author's work  \cite{Acu17preprint}. 
\begin{example}\label{ex: simple}
The simplest examples of iterated planar contact manifolds are  
\begin{enumerate} 
\item the standard contact sphere $(S^{2n+1}, \xi_{std})$,  
\item the unit cotangent bundle $(ST^*S^n, \eta)$ of $S^n$ equipped with its canonical contact structure $\eta$, and 
\item  the convex boundary $(S^n\times S^{n+1}, \xi)$ of the Weinstein domain $DT^*S^n\times D^2$, where $DT^*S^n$ denotes the unit disk cotangent bundle of $S^n$. 
\end{enumerate}
To see this, we will first show that $DT^*S^n$ has the structure of an iterated planar Lefschetz fibration, which immediately implies that $(ST^*S^n, \eta)$ is iterated planar. We will  make use of Construction~\ref{theconstruction} repeatedly in the discussion below. Let $L$ be a linear Lagrangian disk in  the standard symplectic ball $B^{2n}$ so that $\partial L = \Lambda$ is a Legendrian sphere in the standard contact $S^{2n-1}$. If we attach a Weinstein $n$--handle to $B^{2n}$ along $\Lambda$ it is easy to see that we obtain  $F=DT^*S^n$ that contains a Lagrangian $S^n$, that we denote $L'$. Now $D^2\times F$ is a subcritical Weinstein manifold and in its boundary we can take $L'$ to be a Legendrian sphere. Attaching a $(2n+2)$--dimensional $(n+1)$--handle to $D^2\times F$ along $L'$ will result in $B^{2n+2}$ with a symplectic form deformation equivalent to the standard symplectic form. Thus $B^{2n+2}$ has the structure of a Lefschetz fibration with fiber $F$ and one vanishing cycle given by $L'$. But now attaching another $(2n+2)$--dimensional handle to $B^{2n+2}$ along a copy of $L'$ will result in $DT^*S^{n+1}$ with a Lefschetz fibration with fiber $DT^*S^n$ and two vanishing cycles. That is for all $n \geq 1$, $DT^*S^{n+1}$ has a Lefschetz fibration with fiber $DT^*S^n$ and in particular for $n=1$, the fiber  $DT^*S^1=S^1\times [-1,1]$ is planar. Therefore, we conclude that $DT^*S^{n}$ (and hence $B^{2n+2}$) has the structure of an iterated planar Lefschetz fibration. 

Now the iterated planar Lefschetz fibration structure on $DT^*S^n$ yields an iterated planar Lefschetz fibration structure on $DT^*S^n  \times D^2$ and hence induces an iterated planar open book supporting its contact boundary  $(S^n\times S^{n+1}, \xi)$. 

The iterated planar Lefschetz fibration structure on $B^{2n+2}$ constructed above induces an iterated planar open book supporting $\xi_{std}$ on $S^{2n+1}$. The "iterated" pages are $DT^*S^n$ and the "iterated" monodromy is a Dehn twist about $L'$. 
\end{example}

 \begin{example}\label{ex: brieskorn} 
The Brieskorn manifold $\Sigma^{2n-1} (k, 2, \ldots, 2) \subset  \C^{n+1} $  is defined as the intersection of the sphere $S^{2n+1}$ 
with the zero set of the polynomial $ z_0^k + z_1^2 + \cdots + z_n^2$. Viewed as a singularity link, $\Sigma^{2n-1}(k, 2, \ldots, 2 )$ carries a  canonical contact structure $\eta_k$. The contact manifold $(\Sigma^{2n-1}(k, 2, \ldots, 2), \eta_k)$  is supported by an open book with page $DT^*S^{n-1}$, and monodromy the $k$-fold right-handed Dehn twist along the zero section  \cite{vanKoertNiederkruger05}.  It follows that $(\Sigma^{2n-1}(k, 2, \ldots, 2), \eta_k)$ is iterated planar by Example~\ref{ex: simple}.  Note that the corresponding Milnor fiber admits a Lefschetz fibration with fiber $DT^*S^{n-1}$, and hence it admits an iterated planar Lefschetz fibration structure.   \end{example} 

More generally,  we can show that any finitely presented group is the fundamental group of some iterated planar contact manifold. 

\begin{proof} [Proof of Theorem~\ref{IPgroups}]
First we prove the case $n=2$. For each finitely presented group $G$, there is a planar  Lefschetz fibration $W^4 \to D^2$ with $\pi_1(W^4) \cong G$ (see \cite[Proposition 6.1]{GhigginiGollaPlamenevskaya20}). Let $M^5$ be the contact $5$--manifold supported by the open book with page $W^4$ and monodromy the identity map. Then, notice that $M^5$ is simply the double of $W^4  \times [0,1]$. (Note that a neighborhood of a page of the open book is $W^4  \times[0,1]$ and its complement is also diffeomorphic to $W^4  \times [0,1]$. Since the monodromy of the open book is the identity, these two pieces are glued by the identity map, resulting the  double of $W^4  \times [0,1]$.) That is, it is obtained by attaching $3$--, $4$-- and $5$--handles to $W^4  \times [0,1]$. In particular, 
$$\pi_1(M^5) \cong \pi_1 (W^4) \cong G.$$ 
Therefore, $M^5$ is  an iterated planar contact $5$--manifold whose fundamental group is $G$.  Now, consider the contact $7$--manifold $M^7$ supported by the open book with  page $W^4 \times D^2$ (which is a Stein filling of $M^5$) and monodromy the identity map. The discussion above shows that $\pi_1(M^7) \cong \pi_1(W^4 \times D^2)  \cong \pi_1 (W^4) \cong G$ and $M^7$ is iterated planar by definition. It is clear that this process can be iterated for all $n \geq2$, to construct an iterated planar contact $(2n+1)$--manifold whose fundamental group is $G$.  
\end{proof}
\begin{remark}
We note that in the proof above, one does not need to use the identity monodromy to construct a contact manifold with a given fundamental group. If $W^4$ has a non-trivial symplectomorphism $\phi$, then the $5$--manifold $M^5$ with page $W^4$ and monodromy $\phi$ will still have fundamental group $G$ since one would be building $M$ from $W^4 \times [0,1]$ by attaching $3$--, $4$--, and $5$--handles (although  the attaching maps are changed by $\phi$). 
\end{remark}

Next we will show that the connected sum preserves iterated planarity, as a corollary of Lemma~\ref{lem: LFconnect}.

\begin{lemma}\label{lem: LFconnect} Suppose that $f_i: (W_i, \lambda_i)  \to D^2$ is an exact Lefschetz fibration with regular fiber $F_i$, for $i=0,1$. Then a $1$--handle can be attached to $W_0\cup W_1$ to obtain a Weinstein manifold $W$ which admits an exact Lefschetz fibration with regular fiber $F_0\cup F_1$ with a $1$--handle attached. 
\end{lemma}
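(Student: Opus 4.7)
The plan is to attach a $2n$-dimensional Weinstein $1$-handle $H$ to $W_0\sqcup W_1$ in such a way that $H$ itself carries a projection to $D^2$ compatible with the Lefschetz fibrations on $W_0$ and $W_1$. The resulting total space $W=W_0\cup H\cup W_1$ will then admit a map $f:W\to D^2$ making it an exact Lefschetz fibration whose regular fiber is the boundary connected sum $F_0\natural F_1$, namely $F_0\cup F_1$ joined by a $(2n-3)$-dimensional $1$-handle in the fiber direction.

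The first step is to choose the attaching data inside the horizontal boundaries. Pick points $p_i\in\partial F_i$ for $i=0,1$. By the local product structure near $\partial_h W_i$ in Definition~\ref{def: lef}, a neighborhood $U_i$ of $D^2\times\{p_i\}$ in $\partial_h W_i$ is diffeomorphic to $D^2\times D^{2n-3}$, on which $f_i$ is simply projection to the $D^2$ factor. Model the $1$-handle as the $2n$-ball $H=D^1\times D^{2n-3}\times D^2$, and attach it along $\{-1,1\}\times D^{2n-3}\times D^2$ by identifying $\{-1\}\times D^{2n-3}\times D^2$ with $U_0$ and $\{+1\}\times D^{2n-3}\times D^2$ with $U_1$ (after reversing orientation of one factor so that the attaching is boundary-preserving).

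Next, define $f:W\to D^2$ by $f|_{W_i}=f_i$ and $f|_H=$ projection to the last $D^2$ factor. These definitions match on the attaching regions by construction, so $f$ is smooth with $\mathrm{Crit}(f)=\mathrm{Crit}(f_0)\sqcup\mathrm{Crit}(f_1)$. The fiber above $z\in D^2$ equals $F_{0,z}\cup(D^1\times D^{2n-3}\times\{z\})\cup F_{1,z}$, which is $F_0\natural F_1$ with the $(2n-3)$-dimensional $1$-handle $D^1\times D^{2n-3}$ added in the fiber. The new horizontal boundary absorbs the face $D^1\times\partial D^{2n-3}\times D^2$ of $H$, which is manifestly of product form over $D^2$, and the new vertical boundary absorbs $D^1\times D^{2n-3}\times\partial D^2$, which fibers smoothly over $\partial D^2$; this verifies the boundary conditions in Definition~\ref{def: lef}.

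Finally, I would promote $H$ to a genuine subcritical Weinstein $1$-handle compatibly with $f$. Endow $H$ with the product Liouville form obtained by taking the standard Weinstein $1$-handle Liouville form on $D^1\times D^{2n-3}$ (so that each slice $D^1\times D^{2n-3}\times\{z\}$ is a Weinstein $1$-handle in dimension $2n-2$) and adding a Liouville form on the $D^2$ factor. After a small Liouville isotopy of $\lambda_i$ near the two attaching $(2n-1)$-balls to bring it into this product form, one obtains an honest Weinstein $1$-handle attachment, so $(W,\lambda)$ is Weinstein. The main (minor) obstacle is arranging the Liouville forms to match in product coordinates on the attaching regions; since the attaching locus is a $0$-dimensional isotropic $S^0$, this is a routine Moser/Darboux-type normalization for subcritical handles and does not interfere with the fibration structure.
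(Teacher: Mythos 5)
Your proof is correct and follows essentially the same approach as the paper: both model the $2n$-dimensional $1$-handle as $D^1\times D^{2n-3}\times D^2$, viewed as a $D^2$-parametrized family of $(2n-2)$-dimensional Weinstein $1$-handles, and attach it along the horizontal boundary so that the projection to the $D^2$ factor extends the given Lefschetz fibrations. Your write-up is somewhat more explicit about the boundary conditions and the normalization of the Liouville form near the attaching region, but the core construction is identical.
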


\begin{proof} The boundary of $W_i$ naturally splits into two pieces:  the vertical boundary  $f_i^{-1}(\partial D^2)$ and the horizontal boundary $(\partial F_i)\times D^2$, where $F_i$ denotes a fiber of $f_i$. Note that $(\partial F_i)\times D^2$ has a natural contact structure with contact form $\alpha_i+ x\,dy-y\,dx$, where $\alpha_i$ is a contact form for $\partial F_i$ and $(x,y)$ are coordinates on $D^2$. Supposing that $B_i$  is a Darboux ball in $\partial F_i$, we can attach a $2n$--dimensional $1$--handle to $W_0 \cup W_1$ along $(B_0\times D^2)\cup (B_1\times D^2)$. More specifically, if we take a $(2n-2)$--dimensional Weinstein $1$--handle $D^1\times D^{2n-3}$ that is attached along $(\partial D^1)\times D^{2n-3}$, then $D^1\times D^{2n-3}\times D^2$ is a $2n$--dimensional $1$--handle that can be thought of as a $D^2$'s worth of $(2n-2)$--dimensional Weinstein $1$--handles. Therefore, a $2n$--dimensional $1$--handle can be attached to $[(\partial F_1)\cup (\partial F_2)]\times D^2$  by attaching $(\partial D^1)\times D^{2n-3}\times \{p\}$ to $[(\partial F_1)\cup (\partial F_2)]\times \{p\}$ for each $p\in D^2$. That is for each $p\in D^2$, we attach a $(2n-2)$--dimensional $1$--handle to $f_1^{-1}(p)\cup f^{-1}_2(p)$. 
We conclude that the Weinstein manifold $W = W_0\cup W_1\cup (\text{$1$--handle})$ admits an exact  Lefschetz fibration with fiber $F_0\cup F_1\cup (\text{$1$--handle})$. 
\end{proof} 

\begin{proof}[Proof of Theorem~\ref{IPconnect}]
Suppose that $(M_i,\xi_i)$ is a contact manifold supported by an iterated planar open book $(W_i, \phi_i)$, for $i=0,1$. We claim that we can attach a $1$--handle to $W_0\cup W_1$ to obtain a Weinstein manifold $W$ that has the structure of an iterated planar Lefschetz fibration. If $\phi:W\to W$ denotes the symplectomorphism that restricts to $\phi_i$ on $W_i$ and is the identity on the $1$--handle, then one may readily check using \cite[Proposition~4.2]{vanKoert17} that  the open book $(W,\phi)$ supports $(M_0\#M_1,\xi_0\# \xi_1)$ --- which gives a proof of the desired result assuming the claim. 

To prove our claim, we first observe that $W_0$ and $W_1$ both have the structure of an iterated planar Lefschetz fibration,  by definition. Then a $1$--handle can be attached to $W_0\cup W_1$ to obtain a Weinstein manifold $W$ which admits an exact Lefschetz fibration, whose fibers are obtained from the fibers of the Lefschetz fibrations on $W_0$ and $W_1$ by Lemma~\ref{lem: LFconnect}.   Iterating this construction until we get to the $4$--dimensional Lefschetz fibration case, we see that each fiber is simply obtained from the  $2$--dimensional fibers for $W_0$ and $W_1$ together with a $1$--handle attached to connect them. But since these $2$--dimensional fibers for $W_0$ and $W_1$ are both planar, so is the $2$--dimensional fiber for $W$. Therefore, the exact Lefschetz fibration on $W$ is indeed iterated planar. 
\end{proof} 

\begin{remark}
We will see an alternate proof of Theorem~\ref{IPconnect} in the $5$--dimensional case below. 
\end{remark}

\section{Iterated planar contact manifolds in dimension 5}\label{IP5D}

We begin this section with a simple observation. 
\begin{lemma} \label{lem: bind}
A contact $5$--manifold is iterated planar if and only if it admits an adapted open book whose binding is planar. \end{lemma}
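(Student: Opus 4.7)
The plan is to establish the two implications separately, with the reverse direction using the extension theorem of Wendl and Niederkrüger--Wendl listed as Item~\eqref{6} in the introduction.

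For the forward direction, suppose $(M^5,\xi)$ is iterated planar. Then it admits an adapted open book with Weinstein page $W^4$ that carries an iterated planar Lefschetz fibration. Since the iteration in dimension four is a single step, this means $W^4$ carries an exact symplectic Lefschetz fibration $f\colon (W^4,\lambda) \to D^2$ whose regular fiber $F^2$ is a genus zero surface with nonempty boundary. The first observation in Construction~\ref{theconstruction} says that $\partial W^4$ inherits a supporting open book from $f$, obtained by composing $f$ with the angular projection of $D^2$. The pages of that induced open book on $\partial W^4$ are the regular fibers of $f$, which are planar surfaces. Hence the binding $B = \partial W^4$, equipped with its induced contact structure, is a planar contact $3$--manifold.

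For the reverse direction, suppose $(M,\xi)$ admits an adapted open book with Weinstein page $W^4$ such that the binding $B = \partial W^4$, with its induced contact structure, is planar. Since $W^4$ is a Weinstein domain with convex boundary, it is in particular a strong symplectic filling of $B$. Invoking the theorem of Wendl \cite{Wendl10} (and Niederkr\"uger--Wendl \cite{NiederkrugerWendl11}) recalled in Item~\eqref{6}, any planar open book supporting a contact $3$--manifold extends to a Lefschetz fibration over any strong symplectic filling; applied to $W^4$, this produces an exact symplectic Lefschetz fibration $f\colon W^4 \to D^2$ whose regular fiber is the planar page of the chosen supporting open book on $B$. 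This is, by definition, a planar (hence iterated planar, in dimension four) Lefschetz fibration on $W^4$, so the original adapted open book for $(M,\xi)$ is iterated planar.

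The main conceptual point, and the only place care is needed, is that the definition of an iterated planar open book only requires the page $W^4$ to admit \emph{some} planar Lefschetz fibration; it imposes no compatibility between that fibration and the monodromy $\phi$ of the five-dimensional open book. Thus it is legitimate to discard whatever Lefschetz structure may have originally decorated $W^4$ and replace it with the one produced by the Wendl extension theorem applied to the planar open book on the boundary. This decoupling is what makes the backward direction go through cleanly and is the main (essentially soft) obstacle; the rest of the argument is just an application of Construction~\ref{theconstruction} in one direction and the cited extension theorem in the other.
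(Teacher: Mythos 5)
Your proof is correct and follows essentially the same route as the paper: Construction~\ref{theconstruction} gives the forward direction (the planar fibers of the Lefschetz fibration on the page become the planar pages of the induced open book on the binding), and Wendl's theorem on Lefschetz fibrations over strong fillings of planar contact $3$--manifolds gives the converse. The closing remark about decoupling the Lefschetz fibration from the $5$--dimensional monodromy is a correct and useful clarification, though the paper leaves it implicit.
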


\begin{proof} 
If a contact $5$--manifold  is iterated planar,  then by definition, it admits a supporting  open book whose Weinstein page admits an exact symplectic Lefschetz fibration over $D^2$  with planar fibers, and hence  the binding of this open book is a planar contact $3$--manifold. Conversely, if a contact $5$--manifold admits an adapted open book with planar binding, then since the Weinstein page of this open book is a strong filling of its binding and any strong symplectic filling of a planar contact $3$--manifold admits an exact symplectic planar Lefschetz fibration over $D^2$ by Wendl's work \cite{Wendl10}, we conclude that the contact $5$--manifold at hand is iterated planar. 
\end{proof}
Using this lemma, we can give an alternate proof of the fact that the connected sums preserve iterated planarity in dimension 5. 
\begin{proof}[Alternate proof of Theorem~\ref{IPconnect} in dimension 5]
If $(M^5_i, \xi_i)$ is iterated planar, then by definition it admits an adapted open book with Weinstein page $W_i^4$ so that the binding $\partial W_i^4$ is planar. Note that the contact connected sum $(M^5_1 \#M^5_2, \xi_1 \# \xi_2)$ is supported by the open book whose page is obtained as the boundary connected sum $W^4_1 \natural W^4_2$. Hence the binding of this open book is the contact connected sum $\partial W_1^4 \# \partial W_2^4$. The connected sum of planar contact manifolds is planar, see \cite{Torisu00}. Therefore,  the binding of the open book supporting $(M^5_1 \#M^5_2, \xi_1 \# \xi_2)$ is planar and thus Lemma~\ref{lem: bind} implies that the open book is iterated planar. 
\end{proof}

We now consider contact $5$--manifolds with subcritical Stein fillings and prove Theorem~\ref{IPsubcrit1}. 

\begin{proof}[Proof of Theorem~\ref{IPsubcrit1}]
We begin with Item~(1). We are considering the contact structure on $S^1\times S^4$ filled by he Stein manifold $S^1\times D^5$ obtained by attaching a Stein $1$--handle to the standard symplectic $B^6$. This induces an open book supporting the contact structure with pages $S^1\times D^3$, which admits a Lefschetz fibration with fiber $S^1\times D^1$. Thus the open book, and hence the aforementioned contact structure on $S^1\times S^4$ is iterated planar.

For Item~(2) we recall that subcritical Stein fillings of such manifolds have been classified by Ding, Geiges, and Zhang \cite{DingGeigesZhang18} as follows. We first define $L_n$ to be the lens space $L(n,1)$ with an open $3$--ball removed. Suppose that $(M^5,\xi)$ is subcritically Stein fillable contact $5$--manifold whose fundamental group is $\Z/n\Z$. Let $r$ be the rank of $H_2(M;  \Z)$. If $n$ is odd then $M$ is diffeomorphic to 
\[
\partial (L_n\times D^3) \#_r (S^2\times S^3) \text{ or } \partial (L_n\times D^3) \# (S^2\widetilde\times S^3) \#_{r-1} (S^2\times S^3),
\]
depending on whether $M$ is spin or not, where $S^2\widetilde\times S^3$ is the nontrivial $S^3$-bundle over $S^2$. If $n$ is even and $M$ is spin then it is diffeomorphic to
\[
\partial (L_n\times D^3) \#_r (S^2\times S^3),
\]
and if $M$ is not spin then it is diffeomorphic to 
\[
\partial (L_n\widetilde \times D^3) \#_r (S^2\times S^3) \text{ or } \partial (L_n\times D^3) \# (S^2\widetilde\times S^3) \#_{r-1} (S^2\times S^3), 
\] where $L_n\widetilde \times D^3$ is the nontrivial $D^3$-bundle over $L_n$. Moreover, in each homotopy class of almost contact structures, there is a unique subcritically Stein fillable contact structure. 

We claim that homotopy classes of almost contact structures on a connected sum are in one-to-one correspondence with the product of homotopy classes on the summands. To see this recall that homotopy classes of almost contact structures on a $5$--manifold correspond to homotopy classes of sections of the $SO(5)/U(2)$-bundle associated to the tangent bundle of the manifold, and note that $SO(5)/U(2)$ is homotopy equivalent to $\C P^3$ \cite{Geiges08}. Thus,  if we consider the neighborhood of a $4$--sphere in a $5$--manifold it will admit a unique almost contact structure (up to homotopy). So given an almost contact structure on $M_1\# M_2$ it will induce the unique such structure on a neighborhood of the connect summing sphere, and hence we have almost contact structures induced on $M_1$ and $M_2$. It is clear that given such structures on $M_1$ and $M_2$ that we can get one on $M_1\# M_2$ and these constructions are inverses of each other. 

Given Theorem~\ref{IPconnect} concerning connected sums of iterated planar contact structures (and the results above), it will suffice to prove that each homotopy class of almost contact structures on the summands above, is realized by a subcritically Stein fillable contact structure that is supported by an iterated  planar open book. 

We begin with $S^2\times S^3$ and first recall that the possible framings on an $S^1$ in a $5$--manifold are given by $\pi_1(SO(4))=\Z/2\Z$. Thus if we consider a $4$--dimensional handlebody $H$, then $H\times D^2$ is a 6-dimensional handlebody and the framings on the $2$--handles are reduced modulo 2.  Let  $W$ denote the Stein $4$--manifold obtained from $B^4$ by attaching a Stein $2$--handle along a  Legendrian unknot $L \subset (S^3,\xi_{std})$ with {\em odd} Thurston-Bennequin invariant. Then the Stein $6$--manifold $W\times D^2$ is diffeomorphic to $S^2\times D^4$ and thus its boundary is $S^2\times S^3$. As we can realize any possible Chern class on $W$ by an appropriate choice of $L$, we see that we can also realize any possible choice of Chern class on $W\times D^2$ and hence on $S^2\times S^3$. We recall that the Chern class does not uniquely specify the almost contact class of a contact structure. However, the ambiguity comes from the $2$-torsion in the second cohomology \cite{DingGeigesZhang18}. As there is no $2$--torsion in the homology of $S^2\times S^3$ this means that all almost contact structures are realized by this construction. Of course the binding of the open book coming from the product Lefschetz fibration $W\times D^2$ is simply $\partial W$. Since this is a lens space with a tight contact structure it is known to be planar \cite{Schoenenberger07}. Thus by Lemma~\ref{lem: bind}, all the subcritical Stein fillable contact structures on $S^2\times S^3$ are iterated planar. 

The same argument works for $S^2\widetilde\times S^3$ except that one uses a Legendrian unknot $L \subset (S^3,\xi_{std})$ with {\em even} Thurston-Bennequin invariant.

We now turn to $L_n\times D^3$.  As $L_n$ is a $3$--manifold built by a single 0-, $1$--, and $2$--handle, where the attaching region for the $2$--handle is a circle that is a $(n,1)$ curve on the solid torus $0$-handle $\cup$ $1$-handle, we see that $W_0$ in Figure~\ref{figsibcrit} is a handlebody picture for $L_n\times [0,1]$.  
\begin{figure}[htb]
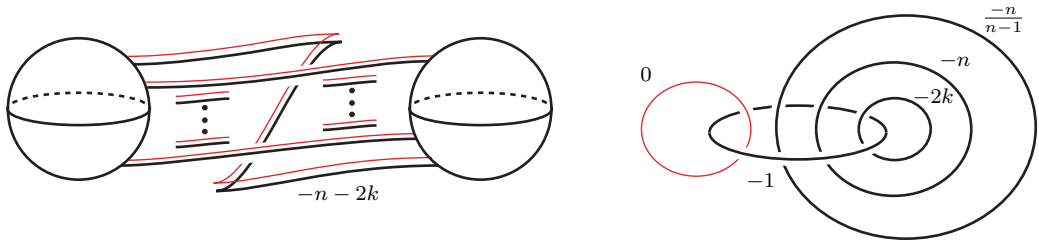
{\tiny
\begin{overpic}
{subcrit}
\put(280, 20){$-1$}
\put(344,52){$-n$}
\put(354, 66){$\frac{-n}{n-1}$}
\put(368, 82){$-2n-2k$}
\put(240, 60){$0$}
\put(110, 15){$-n-2k$}
\end{overpic}}
\caption{The $4$--manifold $W_k$ shown on the left. The $2$--handle runs $n$ times over the $1$--handle. On the right, is the $3$--manifold boundary of $W_k$. The thin red curve is a regular fiber in the fibration and has contact framing $0$ with respect to the fibration framing. All surgery coefficients are with respect to the topological framings, not contact framings.}
\label{figsibcrit}
\end{figure}

As noted above if we change the framing on the $2$--handle by some even number then we get a $4$--manifold $W_k$, that when crossed with $D^2$ is diffeomorphic to $L_n\times D^3$. Notice that Figure~\ref{figsibcrit} can be thought of as a Stein $S^1\times D^3$ with a Legendrian knot $L$ in its boundary, and $L$ has Thurston-Bennequin invariant $-n$. Thus stabilizing once and attaching a Stein handle to $L$ gives the manifold $W_1$. Similarly if we stabilize $L$, $2k+1$ times before attaching a handle we obtain a Stein realization of $W_{k+1}$. We notice that $\partial W_{k+1}$ is the Seifert fibered space shown on the right of Figure~\ref{figsibcrit}. In \cite{LiscaStipsicz07}, Lisca and Stipsicz showed that all contact structures with zero twisting are given by the surgery diagram in Figure~\ref{planarSFS}. 
\begin{figure}[htb]{\tiny
\begin{overpic}
{planarSFS}
\put(110, 26){$(+1)$}
\put(110, 35){$(+1)$}
\put(110, 44){$(-2k-2n)$}
\put(-24, 52){$(-n)$}
\put(110, 62){$\left(-\frac n{n-1}\right)$}
\end{overpic}}
\caption{A surgery diagram for $\partial W_{k}$. All surgery coefficients are with respect to the contact framing.}
\label{planarSFS}
\end{figure}
Zero twisting means that the regular fiber in the Seifert fibration can be realized by a Legendrian knot with contact framing agreeing with the framing coming form the fibration. The thin red curve in Figure~\ref{figsibcrit} is a regular fiber and we see that it has zero twisting. As noted  in \cite{LiscaStipsicz07} such a contact structure can easily be seen to be supported by a planar open book. Therefore, the boundaries of $W_{k+1}\times D^2$ have contact structures that are supported by iterated planar open books. 

So we get Stein manifolds $W_k\times D^2$ diffeomorphic to $L_n\times D^3$, and we can realize all possible Chern classes of $L_n\times D^3$. Thus as above, when $n$ is odd, so that there is no $2$--torsion in the homology of $L_n\times D^3$, we see that all subcritically fillable contact structures on $L_n\times D^3$ can be realized by iterated planar open books. 

When $n$ is even, there are at most two contact structures sharing the same Chern class, by a result  in \cite{DingGeigesZhang18}  which we recalled above, and since we can realize all possible Chern classes with our construction, we have shown at least half of the subcritically fillable contact structures are iterated planar. 

We now turn to Item~(3) in the statement of the theorem.  In the proof of this we will need the following result.

\begin{lemma}[Onaran 2021, \cite{Onaran20Pre}]]\label{lem: legplanar}
If $L$ is a topological link in $\#_k S^1\times S^2$ then there is a planar open book for $\#_k S^1\times S^2$ supporting the unique tight contact structure on this manifold such that $L$ can be realized as a Legendrian link on a page of the open book. 
\end{lemma}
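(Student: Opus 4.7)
The plan is to take the standard planar open book supporting $(\#_k S^1\times S^2, \xi_{std})$, put $L$ in braid position with respect to it, and then use positive Hopf stabilizations to pull $L$ onto a single page while keeping the open book planar.

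I begin by recalling that $(\#_k S^1\times S^2, \xi_{std})$ is supported by the open book whose page is the planar surface $\Sigma_{0,k+1}$ with identity monodromy. This follows from the Giroux correspondence applied to $(S^1\times S^2, \xi_{std})$ (which is supported by an annular open book with identity monodromy), together with the fact that the contact connected sum corresponds to boundary-connected sum of pages. Using an Alexander-type braiding argument (standard transversality applied to the fibration structure on the complement of the binding), I would then isotope $L$ so that it is transverse to every page and disjoint from the binding; that is, $L$ becomes a closed braid around the binding.

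Next I would apply positive Hopf stabilizations to pull the braided $L$ onto a single page. A positive Hopf stabilization along an arc in the page with both endpoints on the same binding component attaches a $1$--handle to a single boundary component of the page; this preserves planarity (the new page is $\Sigma_{0, k+1+N}$ for some $N$) and preserves the supported contact structure. For a closed braid, one can always arrange these stabilizations along arcs whose endpoints lie on a single binding component, and by an induction on the number of intersection points of $L$ with a page, $L$ eventually comes to lie entirely on one page $\Sigma'$ of a planar open book that still supports $\xi_{std}$.

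Finally, I would apply the Legendrian Realization Principle: the page is a convex surface in the contact manifold with dividing set equal to the binding, and $L \subset \Sigma'$ is an embedded multicurve whose only contact with the dividing set occurs at $\partial L \subset \partial \Sigma' = \emptyset$, so trivially transversely. Hence $L$ can be made Legendrian by a $C^0$--small isotopy supported on the page, yielding the desired Legendrian link on a page of a planar open book. The main obstacle is the third step: one must organize the positive Hopf stabilizations so that each is performed along an arc based at a single binding component (to preserve planarity) while still absorbing every ``braid crossing'' of $L$ with pages. Because the intermediate pages are always planar and $L$ is always a closed braid, such arcs can be found, but the careful inductive choice of stabilizing arcs is the technical heart of the argument.
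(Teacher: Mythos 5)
The paper does not prove this lemma --- it is quoted from Onaran's preprint \cite{Onaran20Pre} --- so there is no in-paper argument against which to compare yours. Your outline (braid $L$ with respect to the standard planar open book $(\Sigma_{0,k+1},\mathrm{id})$, absorb the braiding by positive Hopf stabilizations chosen to keep the page planar, then Legendrian-realize on the final page) is a reasonable strategy, but it leaves the decisive step as an assertion. You claim that all of the stabilizing arcs can be taken with both endpoints on a \emph{single} binding component, and you correctly flag this as ``the technical heart of the argument,'' yet you give no argument for it. This is a genuine gap, not a routine verification: when a braid crossing is absorbed by a Hopf stabilization, the natural stabilizing arc typically runs between two different boundary components of the page, and there is no a priori reason one can reroute it onto a single boundary component without crossing other strands of $L$ or undoing earlier choices. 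Moreover, the proposed induction ``on the number of intersection points of $L$ with a page'' is not obviously monotone --- a Hopf stabilization changes the page and the monodromy but does not by itself reduce the geometric intersection of the braid with a page. Precisely organizing the stabilizations so that planarity is preserved is where a proof of this lemma must do real work, so as written the argument is incomplete.

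A secondary, smaller inaccuracy is in the Legendrian realization step. A single page of an open book is not a convex surface with dividing set equal to the binding: the binding is transverse to $\xi$, not Legendrian, so the page is not convex in the usual sense. The correct device is to take the union of two pages $\Sigma_0 \cup \Sigma_\pi$, which is a closed convex surface whose dividing set is $B$; LeRP then requires that every component of $\Sigma_0 \setminus L$ meet $\partial\Sigma_0$. This nonisolating hypothesis can fail (for instance if two parallel components of $L$ cobound an annulus in the interior of the page), so one may need further planar stabilizations to arrange it. This step is standard and repairable, but as stated it is not quite right; the substantive gap remains the one described above.
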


Now if $(W,\omega)$ is a subcritical Stein filling of a given contact $5$--manifold $(M,\xi)$, then we know from Cieliebak \cite{Cieliebak02Pre} that $(W,\omega)$ is obtained from a Stein $4$--manifold $(X,\omega)$ by taking the product with $D^2$. The $4$--manifold $(X,\omega)$ is obtained from $(\natural_k S^1\times D^3,\omega_{std})$, where $\natural$ denotes the boundary sum, by attaching Weinstein $2$--handles along a Legendrian link $L$ in $(\#_k S^1\times S^2, \xi_{std})$. By Lemma~\ref{lem: legplanar}, there is some other Legendrian link $L'$ that is smoothly isotopic to $L$ and sits on a page of a planar open book for $(\#_k S^1\times S^2, \xi_{std})$. After stabilizing $L'$ more (and still calling it $L'$) we can assume the parity of the contact framing of $L$ agrees with the parity of the contact framing of $L'$ and that the components of the links have the same rotation number.  Let $(X',\omega')$ be the Stein domain obtained from $\natural_k S^1\times D^3$ by attaching Stein $2$--handles along the Legendrian link  $L'$. The boundary of $X'$ is a planar contact manifold, since the surgery link $L'$ sits on a page of a {\em planar} open book for $(\#_k S^1\times S^2, \xi_{std})$.  So $(X',\omega')$ admits a planar Lefschetz fibration. As discussed above, $X'\times D^2$ is diffeomorphic to $X\times D^2$ (because of our condition on the parity of contact framings) and the corresponding symplectic manifolds have homotopic almost complex structures (because of our condition on the rotation numbers). Thus $\partial (X' \times D^2)= \partial (X \times D^2)= \partial W = M$ has an iterated planar contact structure in the same homotopy class of almost contact structures as $\xi$. To prove the last part of Item~(3) we notice that by changing the rotation numbers on $L'$ we can achieve different Chern classes for $X'\times D^2$. Thus since $H^2(X'\times D^2 ; \Z)\cong H^2(\partial (X'\times D^2) ; \Z)$ (which follows by the Poincar\'{e}-Lefschetz duality, the long exact sequence for a pair, and the fact that $X'$ has the homotopy type of a $2$-complex) we see that we can realize infinitely many different homotopy classes of  almost contact structures by iterated planar contact structures. \end{proof}

In order to prove Theorem~\ref{felxibleIP} concerning iterated planar contact manifolds with flexible Stein fillings, we first need to consider open book decompositions for some contact structures on $S^5$. 

\begin{example}\label{ex:ust}
The Brieskorn manifold $(\Sigma^5 (k,2,2,2), \eta_k)$ is supported by the open book with page $DT^*S^2$,  and monodromy the  $k$--fold right-handed Dehn twist along the zero-section (see, Example~\ref{ex: brieskorn}). It follows that the iterated planar contact $5$--manifold $(\Sigma^5 (k,2,2,2), \eta_k)$ is Stein fillable and hence tight.  

For $k$ odd, $\Sigma^5 (k,2,2,2)$ is diffeomorphic to $S^5$, see \cite{Brieskorn66}.  With this identification,  we get an infinite set $\{ (S^5, \eta_k)\; | k \; \mbox{odd} \}$  of iterated contact manifolds. Moreover, Ustilovsky \cite{Ustilovsky99} showed that these contact manifolds are non-contactomorphic.  We also note that $(S^5,\eta_k)$ has a Weinstein filling $X_k$ which is obtained from $DT^*S^2\times D^2$ by attaching $k$ Weinstein $3$--handles along  copies of the zero section in $DT^*S^2$, and by Construction~\ref{theconstruction} we see that $X_k$ has the structure of an iterated planar Lefschetz fibration. Since the rank of $H_3(X_k, \Z)$ is equal to $k-1$,  none of these fillings can be subcritical for $k>1$. 

On the other hand, $\Sigma^5 (k,2,2,2)$ is diffeomorphic to $S^2 \times S^3$ for $k$ even.  Moreover, $(\Sigma^5 (2,2,2,2), \eta_2)$ is contactomorphic to $ST^* S^3 \cong S^2 \times S^3$ equipped  with its canonical contact structure $\xi_{can}$  \cite[Lemma~3.1]{KwonvanKoert16}.  We observe that, since the canonical contact structure on the unit cotangent bundle of a closed manifold does not admit a subcritical Stein filling by \cite[Proposition~3.9]{BarthGeigesZehmisch19},   $(S^2 \times S^3, \xi_{can})$ is not contactomorphic to any of the subcritically Stein fillable contact manifolds that appeared in the proof of Theorem~\ref{IPsubcrit1}. 
\end{example}

With the symplectic fillings $X_{2k+1}$ of $S^5$ in hand, we are ready to prove Theorem~\ref{felxibleIP}. 

\begin{proof}[Proof of Theorem~\ref{felxibleIP}]
We begin by recalling a result of Lazarev \cite[Theorem~1.1]{Lazarev20a}, the proof of which says that if $\xi$ is a contact structure on $M$ with $c_1(\xi)=0$ that has a flexible filling $W$, then for any two flexible fillings $W_1$ and $W_2$ of $S^5$, the contact structures on $M$ induced by the flexible fillings $W\natural W_1$ and $W\natural W_2$ are distinct provided that $W_1$ and $W_2$ have distinct homologies.  (Here we can think of $W\natural W_i$ as being constructed from $W\cup W_i$ by attaching a Weinstein $1$--handle.) 

We claim that there are flexible Weinstein structures on the manifolds $X_{2k+1}$ and that the contact structure on their boundaries are all  iterated planar. When discussing the flexible Weinstein structure on $X_{2k+1}$ we will denote it $X'_{2k+1}$. Given this claim and our hypothesized $(M,\xi)$ with $c_1(\xi)=0$ that is iterated planar and has a flexible filling $W$, we can consider $W\natural X'_{2k+1}$. All the contact structures on $M=\partial (W\natural X'_{2k+1})$ are distinct since, as noted above, all the $X'_{2k+1}$ have distinct homologies. Note that $\partial X'_{2k+1}$ is diffeomorphic to $S^5$. Moreover, since the contact structures only differ on a $5$--ball and the space of almost contact structures on $S^5$  is connected,  they are all in the same homotopy class of almost contact structures. Finally,  all the contact structures are iterated planar by Theorem~\ref{IPconnect} concerning connected sums, since $\xi$ and $\partial X'_{2k+1}$ are. 

To prove the claim we revisit the description of the Weinstein fillings $X_{2k+1}$. Recall that they are built from  $(DT^*S^2)\times D^2$ by attaching Weinstein $3$--handles along $2k+1$ copies of the zero section of $DT^*S^2$. Also recall that $(\partial (DT^*S^2 \times D^2), \xi)$ is a contact manifold (see Example~\ref{ex: simple})  supported by the open book with page $DT^*S^2$ and monodromy the identity.  If $\lambda$ is the Liouville form on $T^*S^2$, then $d\lambda$ is the symplectic form on $T^*S^2$. The zero section $Z$ of $DT^*S^2$ is Lagrangian and in fact $\lambda|_Z=0$, so it corresponds to a Legendrian sphere on each page of the open book supporting $(\partial (DT^*S^2 \times D^2), \xi)$. We can get $X_{2k+1}$ by attaching Weinstein $3$--handles to $DT^*S^2 \times D^2$ along $2k+1$ copies of this Legendrian sphere. It is well known, see \cite{vanKoert17} and Construction~\ref{theconstruction},  that the resulting open book for $\partial X_{2k+1}$ still has page $DT^*S^2$ but monodromy a composition of $2k+1$ Dehn twists about $Z$. 

Now let $\Lambda$ be a fiber in the bundle $DT^*S^2\to S^2$, and $U$ the boundary of $\Lambda$. So $\Lambda$ is a Lagrangian disk in $(DT^*S^2, d\lambda)$, and $U$ is a Legendrian circle in $(\partial (DT^*S^2), \ker \lambda)$. Moreover, $\Lambda$ intersects $Z$ exactly once and $\lambda=0$ on $\Lambda$.  Let $Y$ be the result of attaching a Weinstein $2$--handle to $DT^*S^2$ along $U$. Notice that there is now an exact Lagrangian sphere $S$ in $Y$ that is the union of $\Lambda$ and  the core of the $2$--handle. Let $\tau_S$ be the Dehn twist about $S$ and $\tau_Z$ be the Dehn twist about $Z$. The open book $(Y,\tau_Z\circ\tau_S)$ is a stabilization of the open book $(DT^*S^2,\tau_Z)$, \cite{vanKoert17}, and thus supports the same contact structure. Notice that $(DT^*S^2, \tau_Z)$ supports the standard contact $S^5$ and in that contact manifold $Z$ is the standard Legendrian unknot. Thus $(Y,\tau_Z\circ \tau_S)$ also supports $(S^5,\xi_{std})$ and in this manifold both $Z$ and $S$ are standard Legendrian unknots.  This should be clear as $Y$ is the plumbing of two copies of $DT^*S^2$ and $S$ and $Z$ are the zero sections of the two copies. Moreover, this open book is the boundary of the Lefschetz fibration obtained from $Y\times D^2$ by attaching two Weinstein $3$--handles, one along $Z$ and one along $S$. The total space of this Lefschetz fibration is a Weinstein  $6$--manifold, which we denote $(E, \omega)$.  Notice that $DT^*S^2$ is obtained by attaching  a Weinstein $2$--handle to the standard symplectic $B^4$ and $Y$ is the result of attaching a Weinstein $2$--handle to $DT^*S^2$. It follows that $(E,\omega)$  is simply the standard symplectic $B^6$, since $Y\times D^2$ is obtained from the standard symplectic $B^6$ by attaching two Weinstein $2$--handles, and the Weinstein $3$--handles, which are attached to $Y \times D^2$ along $Z$ and $S$ to obtain $E$, cancel them. We finally notice that $X_{2k+1}$ has an open book decomposition $(Y, \tau_Z\circ\tau_S\circ\tau_{Z}^{2k+1})$.

In \cite[Section~4.1]{HondaHuang18pre} Honda and Huang defined an operation $S\uplus Z$ on $S$ and $Z$, which produces a new Legendrian sphere that is a stabilization of $Z$. We note that there is a technical hypothesis that $S$ and $Z$ must intersect $\xi$-transversely, that is, their tangent spaces must span the contact hyperplane at an intersection point. But this is clear as the contact hyperplanes are tangent to the page at $S\cap Z$ and the intersection is transverse in the page. Now let $X'_{2k+1}$  be the result of attaching Weinstein $3$--handles to $E$ along $2k+1$  copies of $S\uplus Z$. By \cite[Section~4.1]{HondaHuang18pre} the sphere $S\uplus Z$ is a stabilization of $Z$ which is the standard unknotted Legendrian sphere, thus it is a loose sphere. Hence $X'_{2k+1}$ is a flexible Weinstein manifold that is diffeomorphic to $X_{2k+1}$ because the sphere $S\uplus Z$ is smoothly isotopic to $Z$ (and their framings are the same since they are in $\pi_2(SO(3))$, which is indeed trivial).  
 
Next we claim that the contact structure on the boundary of the Weinstein $6$--manifold $X'_{2k+1}$ is supported by the open book  $(Y, \tau_Z\circ \tau_S\circ \tau^{2k+1}_{S\uplus Z})$. To establish this we need to see that  $S\uplus Z$ is realized by a Lagrangian on a page of the open book  $(Y,\tau_Z\circ \tau_S)$. We show this in Lemma~\ref{lem: Lagrelpage} below. Thus we are done with the proof of Theorem~\ref{felxibleIP} by noticing that $X'_{2k+1}$ is iterated planar by Lemma~\ref{lem: bind}, since the binding $\partial Y$ is a lens space.  
\end{proof}

\begin{lemma}\label{lem: Lagrelpage} Suppose that $S$ and $Z$ are Legendrian $2$-spheres in a contact $5$-manifold $(M^5, \xi)$ which intersect $\xi$-transversely at a point.  Suppose also that $S$ and $Z$ are both Lagrangian  on a page of an open book supporting $(M^5, \xi)$.  Then the stabilized Legendrian sphere $S\uplus Z$ in  $(M^5, \xi)$ can be realized as a Lagrangian on a page of this open book.
\end{lemma}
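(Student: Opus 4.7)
My plan is to reduce the lemma to a purely local analysis near the intersection point $p = S \cap Z$ and show that the Honda-Huang stabilization, being a local operation, can be implemented as a Lagrangian modification entirely within the page $W$.

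Since the hypothesis that $S$ and $Z$ intersect $\xi$-transversely forces $\xi_p = T_pW$ (as noted in the paragraph preceding the lemma), I would begin by applying Weinstein's Lagrangian neighborhood theorem together with a linear symplectic change of coordinates to produce a Darboux chart $V \cong (B^4, \omega_{std})$ in $W$ around $p$ in which $S \cap V$ is the standard Lagrangian $x$-plane and $Z \cap V$ is the standard Lagrangian $y$-plane. I would then extend this to an ambient contact Darboux chart in $M$ compatible with the open book, so that a neighborhood of $p$ in $M$ is modeled on a standard neighborhood of a pair of Legendrian planes lying on a page.

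Next, I would recall from \cite[Section~4.1]{HondaHuang18pre} that the operation $S \uplus Z$ is defined by a local modification confined to an arbitrarily small neighborhood of $p$: a small Legendrian disk of $Z$ near $p$ is excised and replaced by a new Legendrian disk that zig-zags once along $S$, producing a Legendrian sphere smoothly isotopic to $Z$ but carrying one additional stabilization. In the local Darboux model above, I would write this replacement disk down explicitly as a Lagrangian disk in $(B^4, \omega_{std})$ agreeing with the $y$-plane near $\partial B^4$, for instance by taking the image of the $y$-plane under a compactly supported Hamiltonian isotopy whose generating function implements the appropriate cusp pattern in the $S$-direction.

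The main technical point, and the step I expect to require the most care, is verifying that this explicitly constructed Lagrangian disk in the page is Legendrian isotopic in $M$ to the Honda-Huang $S \uplus Z$, rather than, say, to $Z$ itself or to a different stabilization. I would address this by comparing the front projections of the two constructions in the local model and invoking a local uniqueness statement for Legendrian stabilizations (two Legendrian disks obtained by the same stabilization operation near a point, with matching rotation-number data, are Legendrian isotopic). Once this match is established, gluing the modified disk back into $Z \setminus V$ produces a Lagrangian sphere on the same page of the open book that represents $S \uplus Z$, which is exactly what the lemma requires.
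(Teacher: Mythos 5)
Your setup is the same as the paper's: put a Darboux chart near the intersection point $p$ with $S$ and $Z$ appearing as the standard Lagrangian coordinate planes, and observe that the Honda--Huang operation $S\uplus Z$ is a modification supported near $p$. Where you diverge is in how the modified piece gets realized on the page, and this is where there is a genuine gap.

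Your plan is to write down, by hand, a compactly supported Hamiltonian deformation of the $y$-plane producing some Lagrangian disk in the page, and then to argue that the result must be Legendrian isotopic to $S\uplus Z$ by invoking ``a local uniqueness statement for Legendrian stabilizations.'' No such statement is on the table, and this is exactly the hard part of the lemma, not a side check. Two Lagrangian disks in the page agreeing with $Z$ near the boundary need not give the same Legendrian isotopy class --- indeed the whole content of the lemma is to distinguish $S\uplus Z$ from $Z$ while keeping both on a page --- so appealing to uniqueness begs the question. There is also a second issue you do not address: a Lagrangian obtained from $Z$ by a compactly supported Hamiltonian isotopy in the page is only guaranteed to be exact, not to satisfy $\lambda|_L=0$, so it is not automatically a Legendrian in $(M,\xi)$ at all. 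The paper avoids both problems in one stroke by using the 1-jet structure of the Honda--Huang construction. Writing the modified annulus $A$ as the 1-jet of $\phi f$, the paper notes that $S\uplus Z$ is literally a graph over the Lagrangian $L_{S\uplus Z}$ (the 0-jet, i.e.\ the image of $d(\phi f)$) sitting in the $z=0$ page, with the $z$-coordinate being the function $\phi f$. Then the Liouville form on the page is deformed through $\lambda_t = \lambda - t\,d(\phi f)$; under the corresponding family of contactomorphic contact forms, the graph of $(1-t)\phi f$ is Legendrian for each $t$, and this interpolates from $S\uplus Z$ at $t=0$ to the Lagrangian $L_{S\uplus Z}$ on the page at $t=1$. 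That explicit Liouville deformation is the missing idea in your proposal: it is what certifies that the Lagrangian you land on actually represents $S\uplus Z$ rather than some other stabilization, and simultaneously makes that Lagrangian a Legendrian for the deformed contact form. Without it, your step 4 is an unsupported assertion.
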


\begin{proof} 
We ellaborate on the construction of $S\uplus Z$ in the previous proof, since we will need a specific local model in this proof. 
Consider $\R^5$ with the contact form $\alpha=dz-2\mathbf{y}\cdot d\mathbf{x}- \mathbf{x}\cdot d\mathbf{y}$ (where $\mathbf{x}$ is $(x_1,x_2)$ and $\mathbf{y}$ is $(y_1,y_2)$). There is a neighborhood $\mathcal{N}$ of the transverse intersection point between the spheres $Z$ and $S$ that is contactomorphic to a neighborhood of the origin in $(\R^5, \ker \alpha)$ so that $ \mathcal{N} \cap S$ goes to the $\mathbf{x}$-plane and $ \mathcal{N} \cap Z$ to the $\mathbf{y}$-plane. One then embeds $A=S^1\times \R$ into the hyperplane $z=0$  so that it is Legendrian in $(\R^5, \ker \alpha)$ (and also Lagrangian in the symplectic  hyperplane $z=0$) and is asymptotic to $Z$ on one end and $S$ on the other end. Looking at the $S$ end, one can see that $A$ is the $1$-jet of some function $f_S$ over $\mathbf{x}$. Then using a cutoff function $\phi$, the $1$-jet of $\phi f_S$ deforms $A$ so that it agrees with $S$ away from the intersection point, denote the deformed $A$ by $A'$. This can be done for the $Z$ end too. Thus removing disks from $S$ and $Z$ and inserting the portion of $A'$ we have constructed $S\uplus Z$. 

Now in our situation we notice that in this local model, the page of the open book on which $S$ and $Z$ sit can be taken to be the $z=0$ hypersurface. Thus $A$ also sits on the page and $A'$ is the graph of a function $f:A\to \R$ in $\R^5$. We can extend $f$ to all of $\R^4$ so that it is zero outside a neighborhood of the intersection between $S$ and $Z$. Thus we have constructed a Lagrangian $L_{S\uplus Z}$ in the page of our open book and  $S\uplus Z$ is a graph over this Lagrangian. 

Now we can isotope our monodromy map so that this construction takes place in a neighborhood of the intersection point on which the monodromy is the identity. Let $\lambda$ be the Liouville form on the page, so near $S\cap Z$ the contact form is given by $dz+\lambda$ and $S\uplus Z$ being Legendrian implies that $df+\lambda=0$ on $L_{S\uplus Z}$. Now $\lambda_t=\lambda- t d f$ is a family of Liouville forms on the page that induce contactomorphic contact structures on the manifold and $L_{S\uplus Z}$ is Lagrangian for $d\lambda_t$. 

Moreover,  $L_{S\uplus Z}$ is  Legendrian with respect to the contact form induced by $\lambda_1$ and  the graph of $(1-t)f$ is Legendrian with respect to the contact form induced by $\lambda_t$. Thus we have an isotopy from $S\uplus Z$ in our original contact manifold to the Legendrian $L_{S\uplus Z}$ in the contact structure induced by $\lambda_1$, and there $L_{S\uplus Z}$ is also a Lagrangian on the page of the open book.  
\end{proof}

We now turn to Proposition~\ref{fillablebutnoteStein} that shows, contrary to what happens in dimension 3, that in higher dimensions there are iterated planar contact manifolds that are strongly symplectically fillable but not Stein fillable. 

\begin{proof}[Proof of Proposition~\ref{fillablebutnoteStein}]
Let $W_d^4$ be the Stein domain which is the complement of a neighborhood of a symplectic hypersurface $\Sigma_d$ of degree $d > 1$ 
in $\mathbb{CP}^2$ and  let $\tau_d$ denote the fibered Dehn twist on $W^4_d$ along its boundary $\del W^4_d$. Then, according to \cite[page 423]{ChiangDingvanKoert14}, the contact manifold supported by the open book with page $W^4_d$ and monodromy $\tau_d$ is contactomorphic to $(L^5_d, \xi_{0})$, where $L^5_d$ denotes  the lens space $S^5 / \mathbb{Z}_d$ and $\xi_0$ is obtained by taking the quotient of $\xi_{st}$. We conclude by \cite[Theorem 6.3]{ChiangDingvanKoert14} that  $(L^5_d, \xi_{0})$ is of Boothby-Wang type and therefore it is symplectically fillable by the corresponding disk bundle (see \cite[Lemma 3]{GeigesStipsicz10}), but this symplectic filling can not be Stein, since, as was observed in \cite[Example 6.5]{BowdenCrowleyStipsicz14}, for any $d > 1$, the lens space $L_d^5$ does not carry any Stein fillable contact structures at all.

The binding of the above open book is given by the convex boundary of the Stein page $W^4_d$. Note that $\partial W^4_d$ can be described as a circle bundle over $\Sigma_d$ of Euler number $-[\Sigma_d]^2=-d^2,$ and the induced contact structure on $\partial W^4_d$ is of Boothby-Wang type.

For the case $d=2$, we have $W^4_2 \cong DT^*\mathbb{RP}^2$ ($\cong \mathbb{CP}^2 \setminus$ a quadric), and thus  $\partial W^4_2 \cong   ST^*\mathbb{RP}^2 \cong L(4,1)$.  Therefore,  by Lemma~\ref{lem: bind} and the fact that $L(4,1)$ is planar \cite{Schoenenberger07}, we conclude that $(L_2^5, \xi_0)$ is an iterated planar contact $5$--manifold, which is symplectically fillable but not Stein fillable. 
\end{proof}

Turning to overtwisted contact structures we now prove Theorem~\ref{ot}.

\begin{proof}[Proof of Theorem~\ref{ot}]
By the proof of Theorem~\ref{IPsubcrit1}, we know that each of the homotopy classes of almost contact structures mentioned in the theorem is realized by a contact structure $ \xi$ on $M^5$ that is supported by an iterated planar open book. Moreover, a negative stabilization of such an  open book supports   $(M^5 \# S^5, \xi \# \xi_{OT})$ \cite[Theorem 1.1]{CasalsMurphyPresas19}, and $\xi$ and the overtwisted contact structure $ \xi \# \xi_{OT}$ are in the same homotopy class of almost contact structures on $M^5$. Notice that since both $(M^5,\xi)$ and  $(S^5, \xi_{OT})$ are iterated planar, so is $(M^5 \# S^5, \xi \# \xi_{OT})$, by Theorem~\ref{IPconnect}.  
\end{proof}

\begin{example} \label{ex: Barden} ({\em Simply-connected contact $5$--manifolds}) Recall from the introduction that the overtwisted contact structures on the simply connected $5$--manifolds $M_1=S^5$, $M_\infty=S^2\times S^3$, and $X_\infty=S^2\widetilde\times S^3$, are all iterated planar and each of these $5$--manifolds also carry infinitely many iterated planar Stein fillable contact structures. However, we were unable to say anything about $M_k$ for $1<k<\infty$. 

The $5$--manifold $M_k$ carries a Stein fillable contact structure  $\eta_k$, which is described as a contact open book with Stein pages in \cite[Section 7.2]{DingGeigesvanKoert12}. Specifically let $X_k$ be the Stein domain given by the Legendrian surgery diagram depicted in Figure~\ref{figMk}. 
\begin{figure}[htb]
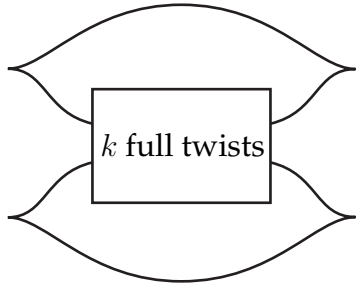
{
\begin{overpic}
{Mk}
\put(35, 48){$k$ full twists}
\end{overpic}}
\caption{The Stein domain $X_k$.}
\label{figMk}
\end{figure}
Notice that both knots in the diagram bound Lagrangian disks in $B^4$ and hence there are Lagrangian spheres $S_1$ and $S_2$ in $X_k$ that come from capping these disks off with the core disks of the $2$--handles. Now consider the Stein manifold $X_k\times D^2$ and attach two Stein $3$--handles to each of these spheres. This gives a Stein domain $W_k$ such that $\partial W_k=M_k$. Moreover, we see that the contact structure $\eta_k$ induced on $M_k$ is supported by the open book $(X_k, (\tau_{S_1}\circ \tau_{S_2})^2)$. 

Notice that the intersection form on $X_k$ cannot embed in a diagonal negative definite form for $k>2$. This says the contact structure on $\partial X_k$ is not planar as discussed in Item~\eqref{2} in the introduction. Moreover, since $b_2^0=1$ when $k=2$ we also see that $\partial X_2$ cannot be planar by Item~\eqref{2} as well. Thus $X_k$ does not have a planar Lefschetz fibration and we cannot use the above open book to see that $\eta_k$ is iterated planar. In fact our guess is that $\eta_k$ for $2\leq k<\infty$ is not iterated planar. 

Furthermore, for a simply connected $5$--manifold $M$, homotopy classes of almost contact structures are in one-to-one correspondence with integral lifts of $w_2(M)$ and the correspondence is given by associating to an almost contact structure its first Chern class \cite[Proposition 8.1.1]{Geiges08} (note that this is true for simply connected $5$--manifolds, as pointed out in an erratum; there are more subtleties in the general case). It follows that $M_k$ admits a unique homotopy class of almost contact structures since $H^2(M_k; \Z) = 0$. Thus $M_k$ has a unique overtwisted contact structure. For $1<k<\infty$ it is unclear whether these are iterated planar.
\end{example}

\section{Symplectic cobordisms}\label{capingproceedure}

Eliashberg \cite{Eliashberg04}  proved that any weak symplectic filling of a closed contact $3$--manifold can be symplectically embedded into a closed symplectic $4$--manifold. (Note that an alternate independent proof of this result was obtained by the second author \cite{Etnyre04a}.) 

As discussed in the introduction, Conway and the second author \cite{ConwayEtnyre20} and, independently, Lazarev \cite{Lazarev20b} have shown that caps for contact manifolds can always be constructed (though it is still not clear if weak symplectic fillings of a contact manifold can be embedded in closed symplectic manifolds). But in dimension 3, the caps constructed in \cite{Eliashberg04, Etnyre04a} are more explicit, and so one might hope for new constructions of caps in higher dimensions that more closely follow the construction in dimension 3. 

After reviewing Eliashberg's proof briefly, we discuss some partial generalizations to higher dimensions below. The first step in Eliashberg's proof is the construction of a cobordism $W$ equipped with a symplectic form $\omega$ such that $\partial W = -M \cup N$ with the following properties: $M$ is a concave boundary component of $W$, contactomorphic to the given weakly fillable contact $3$--manifold and $N$ admits a fibration  over $S^1$ such that the  restriction of $\omega|_N$ to each fiber is symplectic.  This cobordism is obtained  by a symplectic $2$--handle attachment along the binding of an open book adapted to the weakly fillable contact $3$--manifold at hand. Then he fills in this symplectic fibration over $S^1$ by a symplectic Lefschetz fibration over $D^2$ to obtain a symplectic cap.

The first step in Eliashberg's program has already been carried out in higher dimensions by D\"orner, Geiges, and Zehmisch  \cite{DornerGeigesZehmisch14}, which we stated as Theorem~\ref{thm: cob1} in the introduction.  In order to construct a  symplectic cap in higher dimensions, one also needs to obtain an  analogue of the second step of Eliashberg's proof, which we formulate as Question~\ref{question: fib}.  

\begin{question} \label{question: fib} 
For $n >1$, assume that $N$ is a $(2n+1)$--dimensional closed manifold which is a boundary component of a symplectic manifold $(W, \omega)$ so that $(N, \omega|_{N})$ admits a symplectic fibration over $S^1$.  Does there exist a symplectic manifold $Z$ so that we can use $Z$ to cap off $N$?
\end{question}

Theorem~\ref{projcap} proved below gives a positive answer to this question in some nontrivial cases. The simplest case when the answer is yes, is when the symplectomorphism group of the fiber $(X, \omega)$ in the symplectic fibration $N \to S^1$ is connected. This is because we can take the monodromy of the symplectic fibration to be the identity and cap off $N$ by $(X \times D^2, \omega+\omega_{st})$, relying on Lemma~\ref{lem: symp}. 

\begin{lemma}\label{lem: symp}
Let $(X, \omega)$ be a closed symplectic manifold and $\phi_i: X\to X$, for $i= 0,1$, be symplectomorphisms that are isotopic through symplectomorphisms. Let $X_{\phi_i}$ denote the mapping torus of $(X, \phi_i)$. Then there is a symplectic manifold $(W,\omega_W)$ such that $$\partial W=-X_{\phi_0} \cup X_{\phi_1},$$
and $\omega_W$ induces the symplectic structures on the fibers of the mapping cylinders $X_{\phi_i}$. 
\end{lemma}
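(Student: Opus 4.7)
Topologically, I would construct $W$ as the quotient of $Y := X \times [0,1]_t \times [0,1]_s$ by the identification $(x, t, 0) \sim (\phi_t(x), t, 1)$, where $\phi_t$ is the given symplectic isotopy from $\phi_0$ to $\phi_1$. After smoothing corners, $W$ is a compact manifold with $\partial W = -X_{\phi_0} \sqcup X_{\phi_1}$ (sitting at $t = 0$ and $t = 1$ respectively), and the projection $(x, t, s) \mapsto (t, s)$ descends to a smooth fiber bundle $\pi \colon W \to B$ over the annulus $B = [0,1] \times S^1$ with fiber $X$. By reparametrizing the isotopy via a diffeomorphism $[0,1] \to [0,1]$ that is constant near the endpoints, I may assume $\phi_t \equiv \phi_0$ for $t$ near $0$ and $\phi_t \equiv \phi_1$ for $t$ near $1$, so that $\pi$ is a trivial product in a collar of each boundary component.

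Since each $\phi_t$ preserves $\omega_X$, the bundle $\pi$ has structure group $\mathrm{Symp}(X, \omega_X)$, and I would now apply Thurston's coupling construction for symplectic fibrations. The main step is to produce a closed 2-form $\tau$ on $W$ whose restriction to every fiber equals $\omega_X$. The obstructions to extending the fiber class $[\omega_X] \in H^0(B; H^2(X; \R))$ to a class in $H^2(W; \R)$ live in the Leray--Serre differentials
\[
d_2 \colon H^0(B; H^2(X; \R)) \to H^2(B; H^1(X; \R)), \qquad d_3 \colon \ker d_2 \to H^3(B; H^0(X; \R)),
\]
both of which vanish since $B$ retracts onto $S^1$ and hence has $H^k(B; \R) = 0$ for $k \geq 2$. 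I would furthermore arrange $\tau = p^* \omega_X$ on the collar neighborhoods of $\partial W$, which is possible because $\pi$ is a product there and $\omega_X$ descends fiberwise from the projection $p \colon Y \to X$.

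With $\tau$ in hand, I would set $\omega := \tau + K\, \pi^*(ds \wedge dt)$ for a sufficiently large constant $K$. Closedness is immediate, and the standard Thurston block-matrix computation shows that, for $\dim X = 2n$, the top power $\omega^{n+1}$ equals $(n+1) K\, \omega_X^n \wedge \pi^*(ds \wedge dt)$ to leading order in $K$, which is a nowhere-vanishing top form on $W$. Hence $\omega$ is symplectic once $K$ is large enough. Because $\tau$ agrees with $p^* \omega_X$ near the boundary, the form $\omega$ restricts to $\omega_X$ on each fiber of the mapping tori $X_{\phi_0} \to S^1$ and $X_{\phi_1} \to S^1$, which is the required boundary condition. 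The principal obstacle in this plan is the construction of the coupling form $\tau$: the spectral sequence argument above produces it cohomologically, but for an explicit recipe one either patches local extensions by a partition of unity on $B$ or uses the trivialization of $\pi$ afforded by the isotopy $\phi_t$ to pull back $p^*\omega_X$ from $Y$ and correct by a globally defined exact form so that it descends to $W$.
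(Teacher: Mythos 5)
Your argument is correct, but it takes a genuinely different route from the paper's, and in fact it is the more careful one. Both proofs start from the same topological picture: $W$ is a symplectic $X$--bundle over a compact annulus whose two boundary circles carry the mapping tori $X_{\phi_0}$ and $X_{\phi_1}$. The paper then aims for an explicit formula, passing to the infinite cyclic cover $X\times\R\times[0,1]$ equipped with $\Omega=\omega+dt\wedge ds$ and asserting that $\Omega$ is invariant under the $\Z$--action generated by $\Psi(p,t,s)=(\phi_s(p),t+1,s)$, so that it descends to $W$. You instead appeal to Thurston's coupling construction and build the form cohomologically. Your appeal to Thurston is the safer route, because the explicit formula is \emph{not} $\Psi$--invariant in general: if $V_s$ denotes the time--dependent vector field generating $\phi_s$, a direct computation gives
$$\Psi^*\omega=\omega-\phi_s^*(\iota_{V_s}\omega)\wedge ds, \qquad \Psi^*\Omega=\Omega-\phi_s^*(\iota_{V_s}\omega)\wedge ds,$$
and by non--degeneracy of $\omega$ the correction term vanishes only when $V_s\equiv 0$. (Concretely, with $X=T^2$, $\omega=dx\wedge dy$, and $\phi_s(x,y)=(x+s,y)$ one finds $\Psi^*\Omega=\Omega+ds\wedge dy\neq\Omega$.) The cross term that must be added to fix this is exactly what Thurston's construction supplies automatically, and your Leray--Serre argument over $B\simeq S^1$ correctly shows the obstruction to producing the coupling form vanishes. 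One point you should spell out more carefully: the reparametrization of the isotopy so that $\phi_t$ is locally constant near $t=0,1$ is precisely what lets you arrange $\tau=p^*\omega_X$ on a boundary collar, and that is where the required conclusion --- that $\omega$ restricts to the given symplectic form on the fibers of the two mapping tori --- actually comes from, so it deserves an explicit sentence or two rather than a parenthetical remark.
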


\begin{proof}
Let $\phi_s: X \to X$ be the isotopy of symplectomorphisms and assume that $\phi_s$ is independent of $s$ near $t=0$ and $1$. Consider $X \times \R \times [0,1],$ and coordinates $t\in \R$ and $s\in [0, 1]$ on the last two factors. Notice that $\Omega = \omega+dt\wedge ds$ is a symplectic form on $X\times \R  \times [0,1]$. Consider the map 
\begin{align*}
\Psi :X \times \R \times [0,1] &\to X \times \R  \times [0,1],\\ 
(p,t,s) &\mapsto (\phi_s(p),t+ 1,s).  
\end{align*}
Notice that $\Psi^{*}\Omega =\omega+dt\wedge ds + ds\wedge \gamma$ where $\gamma=\phi_s^*(\iota_{X_s}\omega)$ and $X_s$ is the vector field whose flow gives $\phi_s$ (notice that $X_s$ is zero for $s$ near $0$ and $1$). Now $\Omega_u=(1-u)\Omega+u\Psi^*\Omega=\omega+dt\wedge ds + u\, (ds\wedge \gamma)$ is a path of symplectic forms. Indeed notice that $\gamma$ is closed so the last term is $u\, d(s\gamma)$. Applying Moser's method to this family gives a flow generated by the vector field $sX_s$. Let $\Phi:X \times \R \times [0,1] \to X \times \R  \times [0,1]$ be time $1$ map of this flow. Notice that since $sX_s$ is tangent to $X$, and $X_s  = 0$  for $s$ near $0$ and $1$, the map $\Phi$  has the form $\Phi(p,t,s)=(f_s(p),t,s)$ and is the identity near $s=0$ and $1$. Now $\Phi\circ \Psi:X \times \R \times [0,1] \to X \times \R  \times [0,1]$ is a symplectomorphism of $\Omega$ and generates an action of $\Z$ on $X\times \R \times [0,1]$ by symplectomorphisms. So the quotient space $W$ of $X \times \R \times [0,1]$ by this $\Z$--action inherits a symplectic structure $\omega_W$ from $\Omega$. Clearly this is the desired cobordism carrying a symplectic structure.
\end{proof}

We now study when we can glue two symplectic cobordisms along a boundary component that is a symplectic bundle over a circle. To that end we recall that a \dfn{stable Hamiltonian structure} on a $(2n+1)$--dimensional oriented manifold $M$ is a pair $(\omega,\lambda)$ where $\omega$ is a closed $2$-form and $\lambda$ is a $1$-form such that $\lambda\wedge \omega^n>0$ and the kernel of $\omega$ is contained in the kernel of $d \lambda$. The \dfn{Reeb vector field} of such a structure is the unique vector field $R$ such that $R$ is in the kernel of $\omega$ and $\lambda(R)=1$. Moreover, given a stable Hamiltonian structure $(\omega, \lambda)$ on $M$ we have its \dfn{symplectization}, which is the symplectic manifold $((-\epsilon, \epsilon)\times M, \omega + d(t\lambda))$ for small $\epsilon$. If $(\omega, \lambda)$ and $(\omega', \lambda')$ are stable Hamiltonian structures on $M$ and $M'$, respectively,   then a diffeomorphism $f:M' \to M$ satisfying $f^*\omega=\omega'$ and $f^*\lambda=\lambda'$ induces a symplectomorphism between their symplectizations. 

If $p:M\to S^1$ is a bundle over $S^1$ and $\omega$ is a closed $2$--form on $M$ that is symplectic on each fiber, then $(\omega, \lambda=p^*d\theta)$ is a stable Hamiltonian structure. We note that such bundles have a standard form. Choose a fiber $F_0=p^{-1}(\theta_0)$ and let $\omega_0$ denote the restriction of $\omega$ to ${F_0}$. Let $\phi:F_0\to F_0$ be the first return map of the flow of the Reeb vector field $R$. Since it is clear that the flow of $X$ preserves $\omega$, we see that $\phi$ is a symplectomorphism of $(F_0,\omega_0)$. We can now form the mapping cylinder $C_\phi$ by taking the quotient of $F_0\times \R$ by the action of $\Z$ generated by $(p,t)\mapsto (\phi(p),t-2\pi)$. The pull-back of $\omega_0$ to $F_0\times \R$ (which we still denote by $\omega_0$) is invariant under this action, so induces a form (still denoted $\omega_0$) on $C_\phi$. Moreover, the from $dt$ descends to a $1$--form, which we call $d\theta$, on  $C_\phi$ and hence $(\omega_0,d\theta)$ is a stable Hamiltonian structure on $C_\phi$. One may use the identification of $F_0$ with a fiber of $p:M\to S^1$ and the flow of $R$ to construct a diffeomorphism $f:C_\phi\to M$ that pulls back $(\omega, d\theta)$ to $(\omega_0,d\theta)$. The upshot is that all symplectic bundles over $S^1$ are equivalent, as stable Hamiltonian structures, to a mapping cylinder. 

We now observe that symplectic bundles over circles in the boundary of a symplectic manifold have standard neighborhoods. This is essentially Lemma~10 from \cite{DornerGeigesZehmisch14}.

\begin{lemma}\label{nbhdofSHS}
Let $(W,\omega)$ be a symplectic manifold with a boundary component $M$ such that $p:M\to S^1$ is a bundle and $\omega$ is a symplectic form on each fiber of $p$. Let $\omega'$ denote the restriction of $\omega$ to $M$. Then there is a neighborhood $(-\epsilon, 0]\times M$ of $M$ in $W$ such that $\omega$ takes the form 
$
\omega' + dt\wedge p^*d\theta. 
$ In particular, a neighborhood of $M$ in $W$ is symplectomorphic to a piece of the symplectization of the stable Hamiltonian structure $(\omega',\lambda= p^*d\theta)$  on $M$. 
\end{lemma}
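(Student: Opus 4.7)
The plan is to put $\omega$ into the normal form $\omega' + dt\wedge p^*d\theta$ on a collar of $M$ by first choosing a well-adapted tubular embedding and then correcting the remaining discrepancy via Moser's trick. Writing $\lambda := p^*d\theta$ and noting $d\lambda = 0$, the model $\omega' + dt\wedge \lambda$ coincides with the symplectization form $\omega' + d(t\lambda)$ of the stable Hamiltonian structure $(\omega',\lambda)$ on $M$, so the identification with a piece of the symplectization is automatic once the normal form is established.

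The key ingredient is a transverse vector field $X$ along $M$ satisfying $\iota_X\omega|_{TM} = \lambda$. Consider the bundle map $\Psi\colon TW|_M \to T^*M$ sending $Y \mapsto \omega(Y,\cdot)|_{TM}$. Its kernel is the symplectic orthogonal $(TM)^\omega$, which is one-dimensional since $TM$ has codimension one in $TW$. Moreover, the Reeb vector field $R$ satisfies $\omega(R,v) = \omega'(R,v) = 0$ for every $v \in TM$, so $\ker \Psi = \R\cdot R \subset TM$. A dimension count then gives surjectivity of $\Psi$, so an $X$ with $\iota_X\omega|_{TM} = \lambda$ exists. Because $\omega'(v,R) = 0$ for every $v\in TM$ while $\lambda(R)=1$, no $1$-form in the image of $v\mapsto \iota_v\omega'$ can equal $\lambda$, hence $X$ cannot be tangent to $M$; it is transverse. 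The outward direction is automatic: for a positively oriented frame $v_1,\ldots,v_{2n+1}$ of $T_pM$, the identity $\iota_X\omega^{n+1}=(n+1)\,\iota_X\omega\wedge\omega^n$ together with the SHS condition gives
\[
\omega^{n+1}(X,v_1,\ldots,v_{2n+1}) = (n+1)\,\lambda\wedge(\omega')^n(v_1,\ldots,v_{2n+1}) > 0,
\]
so $X$ points out of $W$.

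Extend $X$ to a neighborhood of $M$ and let $\psi_t$ be its flow; then $\Phi\colon (-\epsilon,0]\times M \to W$, $\Phi(t,p):=\psi_t(p)$, is a collar embedding with $d\Phi(\partial_t)|_{\{0\}\times M} = X$. A direct calculation at $t=0$ gives
\[
(\Phi^*\omega)(a\partial_t+v,\, b\partial_t+w) = \omega(aX+v,\, bX+w) = a\lambda(w) - b\lambda(v) + \omega'(v,w),
\]
so $\Phi^*\omega$ agrees pointwise along $\{0\}\times M$ with the model $\omega_1 := p^*\omega' + dt\wedge p^*\lambda$. Both forms are closed and, after shrinking $\epsilon$, symplectic on the collar; symplecticity of $\omega_1$ follows from $\omega_1^{n+1} = (n+1)\,dt\wedge\lambda\wedge(\omega')^n$ and the SHS condition. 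The difference $\eta := \omega_1 - \Phi^*\omega$ is closed and vanishes pointwise along $\{0\}\times M$, so the relative Poincar\'e lemma yields a primitive $\sigma$ with $d\sigma = \eta$ and $\sigma|_{\{0\}\times M} = 0$. The standard Moser argument applied to $\omega_s := \Phi^*\omega + s\eta$ with the time-dependent vector field $V_s$ defined by $\iota_{V_s}\omega_s = -\sigma$ produces a time-$1$ isotopy $\phi_1$ fixing $\{0\}\times M$ with $\phi_1^*\omega_1 = \Phi^*\omega$; the composition $\Phi\circ \phi_1^{-1}$ is the desired collar. The main obstacle is the construction of the outward transverse vector field $X$ satisfying $\iota_X\omega|_{TM} = \lambda$; once that is in place, the Moser argument that follows is standard.
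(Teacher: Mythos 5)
Your proof is correct, but it follows a different (and slightly longer) route than the paper. The paper makes a cleaner initial choice: fix an arbitrary collar of $M$, pull $d\theta$ back to a closed $1$--form on the whole collar, and define $X$ there by the full equation $\iota_X\omega = \pi^*d\theta$ (not just its restriction to $TM$). The transversality computation $\omega(X,R) = \lambda(R) = 1$ is the same as yours, but because $\iota_X\omega$ is closed one now has $L_X\omega = d\iota_X\omega = 0$, so $\omega$ is already invariant under the flow of $X$; the flow collar therefore produces the normal form $\omega' + dt\wedge p^*d\theta$ directly, with no Moser correction. Your version instead solves the weaker pointwise equation $\iota_X\omega|_{TM}=\lambda$ only along $M$ (noting, correctly, that $\ker\Psi = \R R$ so a solution exists and is automatically transverse), extends $X$ arbitrarily, observes that $\Phi^*\omega$ and the model agree along $\{0\}\times M$, and then runs a relative Moser argument to absorb the error. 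That works — the relative Poincar\'e lemma plus shrinking the collar handles the nondegeneracy and boundary issues — but the Moser step is avoidable by the paper's choice of $X$. A minor point in the paper's favor is that your extension of $X$ into the interior is genuinely arbitrary and the closedness of $\iota_X\omega$ is not used, which is exactly why you need the extra step; a minor point in your favor is that you explicitly verify $X$ points outward via the orientation computation $\iota_X\omega^{n+1}|_{TM} = (n+1)\,\lambda\wedge(\omega')^n > 0$, a check the paper leaves implicit.
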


\begin{remark}
If in the statement of the Lemma \ref{nbhdofSHS} we had $-M$ being a boundary component of $W$, then the conclusion would be a neighborhood $[0,\epsilon)\times M$ with the same properties. 
\end{remark}
\begin{proof}[Proof of Lemma~\ref{nbhdofSHS}]
Choose any identification of a neighborhood of $M$ in $W$ with $(-\delta,0]\times M$, so that we have a projection from this neighborhood to $S^1$. Denote the pull-back of $d\theta$ by $d\theta$ and let $X$ be the unique vector field on the neighborhood such that $\iota_X\omega=d\theta$. We claim that $X$ transversely points out of $M$. To see this let $R$ be the Reeb field of $(\omega',\lambda)$ and compute $\omega(X,R)=\lambda(R)=1$. Therefore, $X$ cannot be tangent to $M$ since $R$ is in the kernel of $\omega$ restricted to $M$.  Since the flow of $X$ preserves $\omega$ we can use the backwards flow of $X$ to create the desired neighborhood. 
\end{proof}

The above results will be used in the next section to study caps for some contact manifolds supported by projective open books. For now we turn to the construction of symplectic cobordisms between contact manifolds. In particular, we prove Theorem~\ref{thm: cob}, which says: 
let $(M,\xi)$ be a $(2n+1)$--dimensional closed contact manifold supported by an open book decomposition with page $F$, binding $B=\partial F$ and monodromy $\phi$. Suppose that $X$ is an exact symplectic cobordism such that $\partial X=-B \cup B'$. Then there exists a strong symplectic cobordism $$W=M \times [0, 1] \bigcup_{B\times D^2 \times \{1\} =\partial_{-}X \times D^2} X\times D^2$$  (after rounding corners) such that $\partial W=\partial_{-}W \cup \partial_{+}W$ with $\partial_{-}W=-M$ and $\partial_{+}W=M'$ where $(M', \xi')$ is supported by the open book with page $F \cup X$, binding $B'$ and monodromy $\phi'$ which is identity on $X$ and agrees with $\phi$ on $F$.

\begin{proof}[Proof of Theorem~\ref{thm: cob}]
If  $d\lambda$ is the exact symplectic form on the given cobordism $X$,  then $d \lambda'$ is an exact symplectic form on $X \times D^2$, where $\lambda' = \lambda+x\, dy-y\, dx$.  Let $V$ denote the Liouville vector field of $\lambda$ on $X$.  It follows that $$Z=V+\dfrac{1}{2}\left(x\frac{\partial}{\partial x}+ y\frac{\partial}{\partial y}\right)$$ is the Liouville vector field for $\lambda'$  on $X \times D^2$. Notice that
 $$\partial(X\times D^2)=(X\times \partial D^2) \cup (\partial_{-}X\times D^2) \cup (\partial_{+}X\times D^2).$$

We consider $M \times [0, 1]$ as a part of  the symplectization of $(M, \xi)$ and using $Z$  we glue the symplectic manifold $(X \times D^2, d\lambda')$ to $M \times [0, 1]$ by identifying  $B\times D^2 \times \{1\}$ with $\partial_{-}X\times D^2$ as follows. The gluing of $X \times D^2$  can be viewed as a generalized handle attachment, which we depicted in Figure~\ref{cobord} below. In particular,  the dark green on the right is obtained by the flow of $Z$ applied to the lower boundary, $\partial_{-}X \times D^2$ (actually we apply it to $\partial_{-}X \times D'$ where $D'$ is a slightly smaller disk in $D^2$ as indicated in Figure~\ref{cobord}). Therefore,  it is part of the symplectization of $\partial_{-}X \times D^2$, and thus it can be identified with part of the symplectization of $B\times D^2=\partial_{-}X\times D^2$ in the symplectization of $M$. As a result (after rounding corners) we get the symplectic cobordism $W$ indicated in the theorem. 

\begin{figure}[htb] 
\includegraphics[scale=.3]{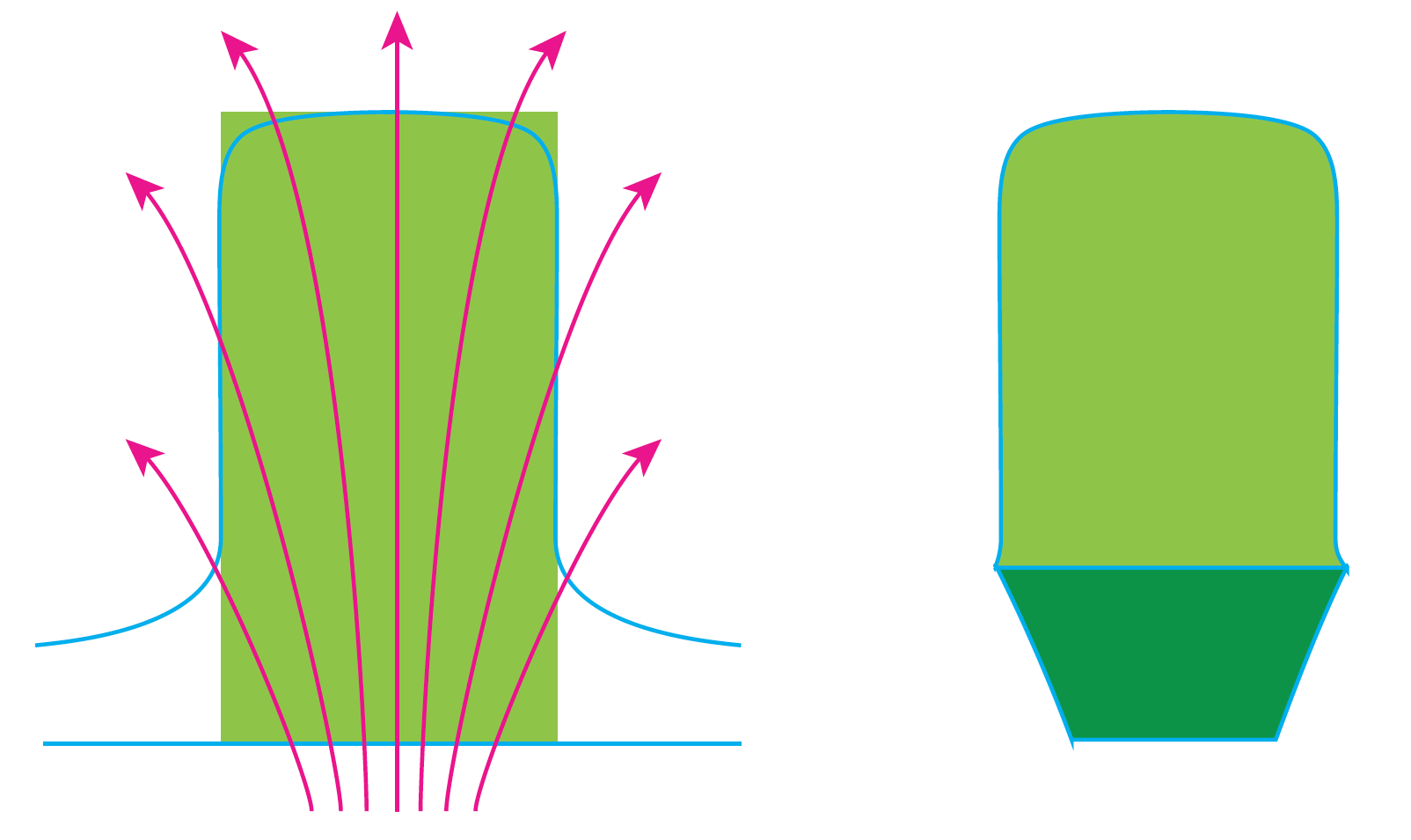}
\caption{The "handle" attachment. The green on the left is $X \times D^2$. The red lines represent the flow lines of the Liouville field $Z$. The blue is the smoothed boundary of the handle we want to attach. The actual handle is depicted on the right.}
\label{cobord}
\end{figure}

To finish the proof we need to check that the upper boundary $(M', \xi')$ of the cobordism  $W$ is supported by the open book described in the theorem. First we observe that the fibration of  the complement of $B \times D^2$ in $M$ extends to a fibration of the complement of $B'$ in $M'$. This is because while attaching the ``handle" $X \times D^2$, we perform a surgery on $M$ removing $B \times D^2$ and gluing in $(X \times \partial D^2) \cup (\partial_+X\times D^2)$ so that $B \times \{p\}$ is identified with  $\partial_- X \times \{p\}$ for each $p \in \partial D^2$.  This shows that $M'$ admits an open book with page $F \cup X$, binding $B'$ and monodromy $\phi'$ which is identity on $X$ and agrees with $\phi$ on $F$. The contact structure $\xi'$ on $M'$ can be given by the kernel of the $1$-forms $\lambda + x\, dt -y\, dx$ on $\partial_+ X \times D^2$, $\lambda$ on $X\times \partial D^2$, and the original contact form on $M$ minus $B\times D^2$, respectively. These contact forms are clearly supported by the pages of the open book. Since the Liouville field $Z$ remains transverse to the upper boundary as we ``round corners" to construct the cobordism the compatibility is preserved. 
\end{proof}

\section{Projective contact manifolds}\label{ProjectiveOB}
We begin this section with a quick proof of Theorem~\ref{ipisp} which says that an iterated planar contact $5$--manifold is projective. Recall that an open book for a $(2n+1)$--manifold is called projective if its Weinstein page embeds as a convex domain in $\C P^n\#_k \overline{\C P^n}$ for some $k$ and  a contact manifold supported by a projective open book is called projective.
\begin{proof}[Proof of Theorem~\ref{ipisp}] The Weinstein page of an iterated planar open book supporting a contact $5$--manifold $(M^5, \xi)$  is a symplectic filling of the planar  binding of the open book.
In the proof of Theorem~4.1 in \cite{Etnyre04b}, the second author has shown that any symplectic filling of a planar contact manifold embeds in a blowup of $S^2\times S^2$, and by possibly blowing up again we can assume that there is at least one blowup. But of course such a manifold is symplectomorphic to $\C P^2$ blown up some number of times, which shows that the iterated planar open book above is projective, and hence $(M^5, \xi)$  is projective.
\end{proof}

We now turn to building caps for some projective contact $5$--manifolds. Specifically, we will prove Theorem~\ref{projcap} which says that if $(M^5,\xi)$ is supported by an open book whose page embeds as a convex domain in $\C P^2\#_n \overline{\C P}^2$ for $n\leq 4$, then $(M,\xi)$ has a symplectic cap that contains an embedded $\C P^2\#_n \overline{\C P}^2$.

\begin{proof}[Proof of Theorem~\ref{projcap}]
Suppose $(M^5,\xi)$ is supported by the open book $(X,\phi)$ where $X$ embeds as a convex domain in $F=\C P^2\#_n\overline{\C P}^2$ for $n\leq 4$. Then we can apply Theorem~\ref{thm: cob1}, using the symplectic cap  $Y= \overline{F \setminus X}$, to build a symplectic cobordism $W$ from $(M,\xi)$ to $M'$, where $M'$ is a symplectic $F$--bundle over $S^1$. Let $\Phi:F\to F$ be the monodromy of this bundle. 

In \cite[Theorem~1.8]{LiWu12}, Li and Wu proved that for any symplectomorphism $f : F \to F$, there are Lagrangian spheres in $F$ such that $f$ acts the same on homology as a composition of Dehn twists about these spheres. In \cite[Theorem~1.1]{LiLiWu15}, Li, Li, and Wu showed that for $\C P^2\#_n \overline{\C P}^2$ with $n\leq 4$, if a symplectomorphism acts trivially on the homology of $F$ then it is symplectically isotopic to the identity map. This of course implies that $f$ is symplectically isotopic to a composition of Dehn twists about the Lagrangian spheres.

Let $S_i$ denote the Lagrangian spheres one gets when this construction is applied to $\Phi^{-1} : F \to F$ and $\Psi : F \to F $ denote the composition of Dehn twists about the $S_i$.  Next we apply Lemma~\ref{lem: symp} to the isotopy from $\Phi$ to $\Psi^{-1}$ to extend the above symplectic cobordism $W$ to a cobordism $W'' = W \cup W'$ that goes from $M$ to $M''$, which is a symplectic $F$--bundle over $S^1$ with monodromy $\Psi^{-1}$.

Now one can take the trivial Lefschetz fibration $F\times D^2$ and attach symplectic $3$--handles along the spheres $S_i$ in different fibers of $F\times S^1$ to get a symplectic Lefschetz fibration $E \to D^2$ whose monodromy is $\Psi$ (see Construction~\ref{theconstruction}). It follows that  $ \del E$ is an $F$--bundle over $S^1$ with monodromy $\Psi$.

Finally, since the diffeomorphism from $M'' \subset  \partial W''$ to $\del E$ given by 
$$
F\times[0,1]/\sim_{\Psi^{-1}} \;\to \; F\times [0,1]/\sim_{\Psi} 
$$ $$
(p,t)\mapsto (p,1-t)
$$
is a symplectomorphism on the fibers, we can use Lemma~\ref{nbhdofSHS} to glue $W''$ and $E$ together along $M''\cong \del E$ to get the desired symplectic cap for $(M,\xi)$. 
\end{proof}

\def\cprime{$'$} \def\cprime{$'$}

\end{document}